\newtheorem{theorem}{Theorem}[section]
\newtheorem{lemma}[theorem]{Lemma}
\newtheorem{definition}[theorem]{Definition}
\newtheorem{proposition}[theorem]{Proposition}
\newtheorem{example}[theorem]{Example}
\newtheorem{corollary}[theorem]{Corollary}
\newtheorem{remark}[theorem]{Remark}
\def\<{\langle}
\def\>{\rangle}
\def\a{\alpha}
\def\b{\beta}
\def\g{\gamma}
\def\l{\lambda}
\def\lr{\longrightarrow}
\def\o{\otimes}
\def\r{\rho}
\def\t{\times}
\date{}
\begin{document}
\renewcommand{\baselinestretch}{1.2}
\renewcommand{\arraystretch}{1.0}
\title{\bf    Crossed product Hom-Hopf algebras and lazy 2-cocycle}
\author{{\bf Daowei Lu$^1$, Shuangjian Guo$^2$
}\\
{\small $^1$Department of Mathematics, Jining University}\\
{\small Qufu 273155, Shandong, P. R. China}\\
{\small $^2$School of Mathematics and Statistics, Guizhou University of Finance and Economics
}\\
{\small Guiyang 550025, Guizhou, P. R. China}
\\}
 \maketitle
\begin{center}
\begin{minipage}{12.cm}

\noindent{\bf Abstract.} Let $(H,\a)$ be a Hom-Hopf algebra and $(A,\b)$ be a Hom-algebra. In this paper we will construct the Hom-crossed product $(A\#_\sigma H,\b\o\a)$, and prove that the extension $A\subseteq A\#_\sigma H$ is actually a Hom-type cleft extension and vice versa. Then we will give the necessary and sufficient conditions to make $(A\#_\sigma H,\b\o\a)$ into a Hom-Hopf algebra. Finally we will study the lazy 2-cocycle on $(H,\a)$.
 \\

\noindent{\bf Keywords:} Hom-Hopf algebra, Crossed product, Radford biproduct, Yetter-Drinfeld module, lazy 2-cocycle.
\\

 \noindent{\bf  Mathematics Subject Classification:} 16W30, 16T05.
 \end{minipage}
 \end{center}
 \normalsize\vskip1cm

\section*{Introduction}
The crossed products of algebras with Hopf algebras were independently introduced in \cite{BCM} and \cite{DT}. Blattner and Montgomery showed in \cite{BM} that a crossed product with invertible 2-cocycle is a cleft extension. In particular, crossed products provide examples of Hopf-Galois extensions. Conversely, a Hopf-Galois extension with normal basis property is a crossed product, see \cite{BM}. In the paper \cite{WJZ}, the authors gave the necessary and sufficient conditions for a crossed product to form a bialgebra, even a Hopf algebra, which is a more general structure than the Radford biproduct.

Algebraic deformation has been well developed recently, and its theory has been applied in modules of quantum phenomena, as well as in analysis of complex systems. Hom-type algebras appeared first in physical contexts, in connection with twisted, discretized or deformed derivatives and corresponding generalizations, discretizations and deformations of vector fields and differential calculus (see \cite{AGS,AS,C1,C2,C3,C4,H}). Hom-type algebras have been introduced in the form of Hom-Lie algebras in \cite{HLS}, where the Jacobi identity was twisted along a linear endomorphism. Meanwhile, Hom-associative algebras have been suggested in \cite{MS1} to give rise to a Hom-Lie algebra using the commutator bracket. Other Hom-type structures such as Hom-coalgebras, Hom-bialgebras, Hom-Hopf algebras and their properties have been considered in \cite{MS2,Y}. The authors \cite{MP2} introduced Hom-analogues of twisted tensor product and smash product by $twisting\  principle$, and in \cite{LW} the Hom-type smash coproduct and Majid bicrossproduct was constructed.

Let $H$ be a Hopf algebra. A left 2-cocycle $\sigma:H\o H\rightarrow k$ is called lazy if it satisfies the condition
$$\sigma(h_{1},g_{1})h_{2}g_{2}=h_{1}g_{1}\sigma(h_{2},g_{2}),$$
for all $h,g\in H$. This kind of cocycles were used in \cite{Ca} to compare the Brauer groups of Sweedler's Hopf algebra with respect to the different quasitriangular structures. Lazy 2-cocycle has been recently developed in \cite{BC} which mainly stated that the set $Z^{2}_{L}(H)$ of all normalized and convolution invertible lazy 2-cocycles on $H$ form a group and defined the second lazy cohomology group $H^{2}_{L}(H)=Z^{2}_{L}(H)/B^{2}_{L}(H)$, where $B^{2}_{L}(H)$ is lazy 2-coboundary. This group generalizes the second Sweedler cohomology
group of a cocommutative Hopf algebra. Moreover the group $H^{2}_{L}(H)$ could be imbedded as a subgroup into $Bigal(H)$, the group of Bigalois objects of $H$.

Motivated by these ideas, in this paper, firstly we will construct the Hom-crossed product, and prove the equivalence between crossed products and cleft extensions. Then we will give the necessary and sufficient conditions for a crossed product to form a Hom-Hopf algebra. Finally we will establish the lazy 2-cocycle in the setting of Hom-Hopf algebra.

This paper is organized as follows:

In section 1, we will recall the basic definitions and results on Hom-Hopf algebra, such as Hom-module, Hom-comodule, Hom-smash product and Hom-smash coproduct.

In section 2, we will construct the Hom-crossed product, and prove the equivalence between crossed products and cleft extensions (see Theorem 2.10).

In section 3, We will give the necessary and sufficient conditions for a crossed product to form a Hom-Hopf algebra (see Proposition 3.2 and 3.5).

In section 4, we will define the Hom-type lazy 2-cocycle and prove that all normalized and convolution invertible lazy 2-cocycles form a group $Z^{2}_{L}(H)$. Then we will extend this kind of 2-cocycle to Drinfeld double $D(H)$ and Radford biproduct. In the end we will use the lazy 2-cocycle to construct the duals of the objects in the more general Yetter-Drinfeld category (see Proposition 4.11).

Throughout this article, all the vector spaces, tensor product and homomorphisms are over a fixed field $k$ unless otherwise stated. We use the Sweedler's notation for the terminologies on coalgebras. For a coalgebra $C$, we write comultiplication $\Delta(c)=\sum c_{1}\otimes c_{2}$ for any $c\in C$.
\section{Preliminary}

In this section, we will recall the definitions in \cite{MP} on Hom-Hopf algebras,  Hom-modules and  Hom-comodules.

 A unital Hom-associative algebra is a triple $(A,\mu,\alpha)$ where $\alpha:A\lr A$ and $\mu:A\o A\lr A$ are linear maps, with notation $\mu(a\o b)=ab$ such that for any $a,b,c\in A$,
 \begin{eqnarray*}
&&\alpha(ab)=\alpha(a)\alpha(b),\ \alpha(1_{A})=1_{A},\\
&&1_{A}a=\alpha(a)=a1_{A},\ \alpha(a)(bc)=(ab)\alpha(c).
\end{eqnarray*}
A linear map $f:(A,\mu_{A},\alpha_{A})\lr (B,\mu_{B},\alpha_{B})$ is called a morphism of Hom-associative algebra if $\alpha_{B}\circ f=f\circ\alpha_{A}$, $f(1_{A})=1_{B}$ and $f\circ\mu_{A}=\mu_{B}\circ(f\o f).$

 A counital Hom-coassociative coalgebra is a triple $(C,\Delta,\varepsilon,\alpha)$ where $\alpha:C\lr C$, $\varepsilon:C\lr k$, and $\Delta:C\lr C\o C$ are linear maps such that
 \begin{eqnarray*}
&&\varepsilon\circ\alpha=\varepsilon,\ (\alpha\o\alpha)\circ\Delta=\Delta\circ\alpha,\\
&&(\varepsilon\o id)\circ\Delta=\alpha=(id\o\varepsilon)\circ\Delta,\\
&&(\Delta\o\alpha)\circ\Delta=(\alpha\o\Delta)\circ\Delta.
\end{eqnarray*}

A linear map $f:(C,\Delta_{C},\alpha_{C})\lr (D,\Delta_{D},\alpha_{D})$ is called a morphism of Hom-coassociative coalgebra if $\alpha_{D}\circ f=f\circ\alpha_{C}$, $\varepsilon_{D}\circ f=\varepsilon_{C}$ and $\Delta_{D}\circ f=(f\o f)\circ\Delta_{C}.$

In what follows, we will always assume all Hom-algebras are unital and Hom-coalgebras are counital.

A Hom-bialgebra is a quadruple $(H,\mu,\Delta,\alpha)$, where $(H,\mu,\alpha)$ is a Hom-associative algebra and $(H,\Delta,\alpha)$ is a Hom-coassociative coalgebra such that $\Delta$ and $\varepsilon$ are morphisms of Hom-associative algebra.

A Hom-Hopf algebra $(H,\mu,\Delta,\alpha)$ is a Hom-bialgebra $H$ with a linear map $S:H\lr H$(called antipode) such that
\begin{eqnarray*}
&&S\circ\alpha=\alpha\circ S,\\
&&S(h_{1})h_{2}=  h_{1}S(h_{2})=\varepsilon(h)1,
\end{eqnarray*}
for any $h\in H$. For $S$ we have the following properties:
 \begin{eqnarray*}
&&S(h)_{1}\o S(h)_{2}=  S(h_{2})\o S(h_{1}),\\
&&S(gh)=S(h)S(g),\ \varepsilon\circ S=\varepsilon.
 \end{eqnarray*}
For any Hopf algebra $H$ and any Hopf algebra endomorphism $\alpha$ of $H$, there exists a Hom-Hopf algebra $H_{\alpha}=(H,\alpha\circ\mu,1_{H},\Delta\circ\alpha,\varepsilon,S,\alpha)$.

Let $(A,\alpha_{A})$ be a Hom-associative algebra, $M$ a linear space and $\alpha_{M}:M\lr M$ a linear map. A left $A$-module structure on $(M,\alpha_{M})$ consists of a linear map $A\o M\lr M$, $a\o m\mapsto a\cdot m$, such that
 \begin{eqnarray*}
&&1_{A}\cdot m=\alpha_{M}(m),\\
&&\alpha_{M}(a\cdot m)=\alpha_{A}(a)\cdot\alpha_{M}(m),\\
&&\alpha_{A}(a)\cdot(b\cdot m)=(ab)\cdot\alpha_{M}(m),
\end{eqnarray*}
for any $a,b\in A$ and $m\in M.$

Similarly we can define the right $(A,\alpha)$-modules. Let $(M,\mu)$ and $(N,\nu)$ be two left $(A,\alpha)$-modules, then a linear map $f:M\lr N$ is a called left $A$-module map if $f(am)=af(m)$ for any $a\in A$, $m\in M$ and $f\circ\mu=\nu\circ f$.

Let $(C,\alpha_{C})$ be a Hom-coassociative coalgebra, $M$ a linear space and $\alpha_{M}:M\lr M$ a linear map. A right $C$-comodule structure on $(M,\alpha_{M})$ consists a linear map $\rho:M\lr M\o C$ such that
 \begin{eqnarray*}
&&(id\o\varepsilon_{C})\circ\rho=\alpha_{M},\\
&&(\alpha_{M}\o\alpha_{C})\circ\rho=\rho\circ\alpha_{M},\\
&&(\rho\o\alpha_{C})\circ\rho=(\alpha_{M}\o\Delta)\circ\rho.
 \end{eqnarray*}
Let $(M,\mu)$ and $(N,\nu)$ be two right $(C,\gamma)$-comodules, then a linear map $g:M\lr N$ is a called right $C$-comodule map if $g\circ \mu=\nu\circ g$ and $\rho_{N}\circ g=(g\otimes id)\circ\rho_{M}$.

Let $(H,\mu_{H},\Delta_{H},\alpha_{H})$ be a Hom-bialgebra. A Hom-associative algebra $(A,\mu_{A},\alpha_{A})$ is called a left $H$-module Hom-algebra if $(A,\alpha_{A})$ is a left $H$-module, with the action $H\o A\lr A,\ h\o a\mapsto h\cdot a$, such that
 \begin{eqnarray*}
&&\alpha^{2}_{H}(h)\cdot (ab)=  (h_{1}\cdot a)(h_{2}\cdot b),\\
&&h\cdot 1_{A}=\varepsilon(h)1_{A},
\end{eqnarray*}
for all $h\in H$ and $a,b\in A$.

When $A$ is a left $H$-module Hom-algebra, in \cite{MP} the Hom-smash product $A\#H$ is defined as follows:
$$(a\#h)(b\#k)=  a(\alpha^{-2}_{H}(h_{1})\cdot\alpha^{-1}_{A}(b))\#\alpha^{-1}_{H}(h_{2})k,$$
for all $a,b\in A$ and $h,k\in H$.

Recall from \cite{LW} that a Hom-coalgebra $(C,\g)$ is Hom-comodule coalgebra if it is a left Hom-comdule over the Hom-bialgebra $(H,\a)$ and satisfies the following relation:
\begin{eqnarray*}
&&\a^2(c_{(-1)})\o c_{(0)1}\o c_{(0)2}=c_{1(-1)}c_{2(-1)}\o c_{1(0)}\o c_{2(0)},\\
&&\varepsilon(c_{(-1)})c_{(0)}=\varepsilon(c)1.
\end{eqnarray*}
Then we have the Hom-smash coproduct $(C\times H,\g\o\a)$ with the comultiplication and counit
\begin{eqnarray*}
&&\Delta(c\times h)=c_1\t\g^{-2}(c_{2(-1)})\a^{-1}(h_1)\o\g^{-1}(c_{2(0)})\t h_2,\\
&&\varepsilon(c\t h)=\varepsilon_C(c)\varepsilon_H(h).
\end{eqnarray*}

\section{Hom-Crossed product}
\def\theequation{2.\arabic{equation}}
\setcounter{equation} {0} \hskip\parindent

In this section we will construct the crossed product on the Hom-Hopf algebra and prove that Hom-crossed product is equivalent to Hom-cleft extension.

\begin{definition}
Let $(H,\a)$ be a Hom-Hopf algebra and $(A,\b)$ a Hom-algebra. We say that $H$ weakly acts on $A$ from the left if there is a linear map, given by $\cdot:H\o A\rightarrow A$, such that for all $h\in H$ and $a,b\in A$
$$\b(h\cdot a)=\a(h)\cdot \b(a)$$
and
$$\a^2(h)\cdot(ab)=(h_1\cdot a)(h_2\cdot b),~~h\cdot 1=\varepsilon_H(h)1.$$
\end{definition}

\begin{proposition}
Let $(H,\a)$ be a Hom-Hopf algebra and $(A,\b)$ a Hom-algebra. Assume that $H$ weakly acts on $A$ from the left, then $(A\#_\sigma H,\b\o\a)$ is a Hom-algebra under the following multiplication
$$(a\#h)(b\#g)=a[(\a^{-4}(h_{11})\cdot \b^{-2}(b))\sigma(\a^{-3}(h_{12}),\a^{-2}(g_{1}))]\#\a^{-1}(h_2g_2),
$$
for all $a,b\in A$ and $h,g\in H$,
if and only if
\begin{itemize}
  \item [(1)] $A$ is a twisted Hom-$H$-module, that is, $1\cdot a=\b(a)$ for all $a\in A$, and
  $$(h_1\cdot(\a^{-1}(l_1)\cdot a))\sigma(\a(h_2),\a(l_2))=\sigma(\a(h_1),\a(l_1))(\a^{-1}(h_2l_2)\cdot \b(a)),$$
  for all $h,l\in H.$
  \item [(2)] $\sigma$ is normal, namely for all $h\in H$,
  $$\sigma(h,1)=\sigma(1,h)=\varepsilon_H(h),~~\sigma\circ(\a\o\a)=\b\circ\sigma.$$
  \item [(3)]For all $h,l,m\in H,$
  $$(h_1\cdot\sigma(l_1,m_1))\sigma(\a(h_2),l_2m_2)=\sigma(\a(h_1),\a(l_1)))\sigma(h_2l_2,\a^2(m)).$$
\end{itemize}
\end{proposition}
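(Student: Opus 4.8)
The plan is to check directly the four defining axioms of a Hom-associative algebra for the triple $(A\#_\sigma H,\ \cdot,\ \beta\otimes\alpha)$ with unit $1_A\#1_H$, and to observe that conditions (1)--(3) are precisely the constraints these axioms impose, which simultaneously handles both implications: the ``if'' direction verifies the axioms from (1)--(3), while the ``only if'' direction recovers (1)--(3) by specializing the general identities. Throughout I abbreviate a typical element as $a\#h$ and write $1=1_A\#1_H$.

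First I would dispose of the structural axioms. That $\beta\otimes\alpha$ is unital is immediate from $\beta(1_A)=1_A$ and $\alpha(1_H)=1_H$; that it is multiplicative, $(\beta\otimes\alpha)\bigl((a\#h)(b\#g)\bigr)=(\beta\otimes\alpha)(a\#h)\,(\beta\otimes\alpha)(b\#g)$, follows by applying $\beta\otimes\alpha$ to the defining product and moving $\beta$ across using the weak-action compatibility $\beta(h\cdot a)=\alpha(h)\cdot\beta(a)$ from Definition~2.1, the coalgebra relation $(\alpha\otimes\alpha)\circ\Delta=\Delta\circ\alpha$, the $\alpha$-multiplicativity of $\mu_H$, and the normalization $\sigma\circ(\alpha\otimes\alpha)=\beta\circ\sigma$; this is where the second half of (2) is forced. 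For the Hom-unit axiom $1\cdot(a\#h)=(\beta\otimes\alpha)(a\#h)=(a\#h)\cdot 1$, I would expand each one-sided product; the left product collapses once one invokes $1\cdot a=\beta(a)$ and $\sigma(1,\cdot)=\varepsilon_H$, and the right product collapses using $h\cdot 1_A=\varepsilon_H(h)1_A$ and $\sigma(\cdot,1)=\varepsilon_H$. This pins down exactly the requirement $1\cdot a=\beta(a)$ of (1) and the normality $\sigma(h,1)=\sigma(1,h)=\varepsilon_H(h)$ of (2).

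The heart of the argument is Hom-associativity, $(\beta\otimes\alpha)(x)\,(yz)=(xy)\,(\beta\otimes\alpha)(z)$ with $x=a\#h$, $y=b\#g$, $z=c\#k$. I would expand both sides completely, pushing every action and every value of $\sigma$ into the $A$-tensor factor and every product of $H$-elements into the $H$-factor. After flattening the iterated coproducts by Hom-coassociativity $(\Delta\otimes\alpha)\circ\Delta=(\alpha\otimes\Delta)\circ\Delta$ and collecting powers of $\alpha$, the $H$-components of the two sides coincide automatically, so all the content resides in the $A$-component. Comparing $A$-components, the terms in which the action of $H$ is iterated on $c$ (after using $\alpha^2(h)\cdot(ab)=(h_1\cdot a)(h_2\cdot b)$ and the Hom-associativity of $A$) agree precisely when the twisted-module identity (1) holds, while the terms that are pure products of two $\sigma$-values agree precisely when the cocycle identity (3) holds; the general equation is exactly their simultaneous combination. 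For the ``only if'' direction I would extract (1) and (3) individually by setting the appropriate $A$-arguments equal to $1_A$, so that $h\cdot 1_A=\varepsilon_H(h)1_A$ annihilates the extraneous factors and leaves each identity in isolation.

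The genuine difficulty is bookkeeping. The product already carries four negative powers of $\alpha$ on $h$, so a triple product generates deeply nested coproducts such as $h_{111},h_{112},\dots$ that must be reorganized consistently, and the powers of $\alpha$ and $\beta$ must be tracked so that both sides land in a single normalized form before the weak-action axiom and the Hom-associativity of $A$ can be applied. I would fix once and for all a normalization convention for three-fold coproducts $h_1\otimes h_2\otimes h_3$ with prescribed $\alpha$-powers, rewrite every expression into it, and only then match the $A$- and $H$-components factor by factor; the remaining steps are then routine applications of the weak-action and Hom-coassociativity identities.
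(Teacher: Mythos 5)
Your strategy --- proving Hom-associativity by expanding both sides of $(\b\o\a)(x)\,(yz)=(xy)\,(\b\o\a)(z)$, normalizing the nested coproducts via Hom-coassociativity, and invoking (1) and (3) inside the $A$-factor, while checking unitality and the multiplicativity of $\b\o\a$ separately against (2), and recovering (1)--(3) in the converse by specializing arguments to $1_A$ --- is exactly the approach of the paper, which likewise dismisses the $(\Rightarrow)$ direction as routine and devotes its proof entirely to the associativity chain. The only caveat is that your proposal stops at the plan: the whole substance of the paper's argument is precisely the bookkeeping you defer (the ten-step rewriting that tracks the powers of $\a$ and $\b$ and locates the single places where (1) and (3) are applied), so nothing in your outline needs changing, but it would still have to be carried out.
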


\begin{proof}
The direction $(\Rightarrow)$ is a routine exercise and we only prove the other direction. For all $a,b,c\in A$ and $h,g,l\in H$,
$$\begin{aligned}
&[(a\#h)(b\#g)](\b(c)\#\a(l))\\
=&\{a[(\a^{-4}(h_{11})\cdot \b^{-2}(b))\sigma(\a^{-3}(h_{12}),\a^{-2}(g_{1}))]\#\a^{-1}(h_2g_2)\}(\b(c)\#\a(l))\\
=&\{[\a^{-1}(a)(\a^{-4}(h_{11})\cdot\b^{-2}(b))][\sigma(\a^{-3}(h_{12}),\a^{-2}(g_1))(\a^{-6}(h_{211}g_{211})\cdot\b^{-2}(c))]\}\\
&\sigma(\a^{-3}(h_{212}g_{212}),l_1)\#\a^{-2}(h_{22}g_{22})l_2\\
\stackrel{(1)}{=}&\{[\a^{-1}(a)(\a^{-4}(h_{11})\cdot\b^{-2}(b))][(\a^{-6}(h_{1211})\cdot(\a^{-6}(g_{111})\cdot\b^{-3}(c)))\sigma(\a^{-5}(h_{1212}),\a^{-4}(g_{112}))]\}\\
&\sigma(\a^{-3}(h_{122})\a^{-2}(g_{12}),l_1)\#\a^{-1}(h_{2}g_{2})l_2\\
=&\{[\a^{-1}(a)((\a^{-5}(h_{11})\cdot\b^{-3}(b))(\a^{-5}(h_{12})\cdot(\a^{-7}(g_{111})\cdot\b^{-4}(c))))]\sigma(\a^{-3}(h_{211}),\a^{-3}(g_{112}))\}\\
&\sigma(\a^{-3}(h_{212})\a^{-2}(g_{12}),l_1)\#\a^{-1}(h_{2}g_{2})l_2\\
=&[a(\a^{-2}(h_{1})\cdot(\b^{-2}(b)(\a^{-6}(g_{111})\cdot\b^{-3}(c)))][\sigma(\a^{-3}(h_{211}),\a^{-3}(g_{112}))\\
&\sigma(\a^{-4}(h_{212})\a^{-3}(g_{12}),\a^{-1}(l_1))]\#\a^{-1}(h_{2}g_{2})l_2\\
=&[a(\a^{-2}(h_{1})\cdot(\b^{-2}(b)(\a^{-4}(g_{1})\cdot\b^{-3}(c)))][\sigma(\a^{-3}(h_{211}),\a^{-3}(g_{211}))\\
&\sigma(\a^{-4}(h_{212})\a^{-4}(g_{212}),\a^{-1}(l_1))]\#(\a^{-1}(h_{2})\a^{-2}(g_{22}))l_2\\
\stackrel{(3)}{=}&[a(\a^{-2}(h_{1})\cdot(\b^{-2}(b)(\a^{-4}(g_{1})\cdot\b^{-3}(c)))][(\a^{-4}(h_{211})\cdot\sigma(\a^{-4}(g_{211}),\a^{-3}(l_{11})))\\
&\sigma(\a^{-3}(h_{212}),\a^{-4}(g_{212})\a^{-3}(l_{12}))]\#(\a^{-1}(h_{2})\a^{-2}(g_{22}))l_2\\
=&\{a[(\a^{-3}(h_{1})\cdot(\b^{-3}(b)(\a^{-5}(g_{1})\cdot\b^{-4}(c))))(\a^{-5}(h_{211})\cdot\sigma(\a^{-5}(g_{211}),\a^{-4}(l_{11})))]\}\\
&\sigma(\a^{-2}(h_{212}),\a^{-3}(g_{212})\a^{-2}(l_{12}))\#(\a^{-1}(h_{2})\a^{-2}(g_{22}))l_2\\
=&\{a[\a^{-3}(h_{11})\cdot((\b^{-3}(b)(\a^{-5}(g_{1})\cdot\b^{-4}(c)))\sigma(\a^{-5}(g_{211}),\a^{-4}(l_{11})))]\}\\
&\sigma(\a^{-1}(h_{12}),\a^{-3}(g_{212})\a^{-2}(l_{12}))\#h_{2}(\a^{-2}(g_{22})\a^{-1}(l_2))\\
=&\b(a)\{[\a^{-3}(h_{11})\cdot((\b^{-3}(b)(\a^{-5}(g_{1})\cdot\b^{-4}(c)))\sigma(\a^{-5}(g_{211}),\a^{-4}(l_{11})))]\\
&\sigma(\a^{-2}(h_{12}),\a^{-4}(g_{212})\a^{-3}(l_{12}))\}\#h_{2}(\a^{-2}(g_{22})\a^{-1}(l_2))\\
=&\b(a)\{[\a^{-3}(h_{11})\cdot\b^{-2}((b(\a^{-4}(g_{11})\cdot\b^{-2}(c))\sigma(\a^{-3}(g_{12}),\a^{-4}(l_{1}))))]\\
&\sigma(\a^{-2}(h_{12}),\a^{-3}(g_{21})\a^{-3}(l_{21}))\}\#h_{2}(\a^{-2}(g_{22})\a^{-2}(l_{22}))\\
=&(\b(a)\#\a(h))[(b\#g)(c\#l)]
\end{aligned}$$
It is easy to see that $1\#1$ is the unit and $(\b\o\a)((a\#h)(b\#g))=(\b(a)\#\a(h))(\b(b)\#\a(g))$. The proof is completed.

\end{proof}

\begin{remark}
The condition (3) in the above proposition is actually a generalized form of Hom-2-cocycle introduced in \cite{LW}, when taking $A=k$.
\end{remark}

\begin{example}
\label{A}
(1) Consider the case when $\sigma$ is trivial, that is, $\sigma(h,g)=\varepsilon_H(h)\varepsilon_H(g)1$ for all $h,g\in H.$ Then the Hom-crossed product is reduced to Hom-smash product.

(2) Let $H$ be a Hopf algebra, $A$ an algebra and $H$ weakly acts on $A$. Assume that $\a$ is a Hopf automorphism of $H$ and $\b$ is an algebra isomorphism of $A$. Then we have the Hom-Hopf algebra $(H_\a,\a)$ and Hom-algebra $(A_\b,\b)$. Furthermore assume that $\b(h\cdot a)=\a(h)\cdot \b(a)$, then define the action $h\triangleright a=\a(h)\cdot \b(a)$, then $H_\a$ weakly acts on $A_\b$. If $A\#_\sigma H$ is a crossed product and $\sigma\circ(\a\o\a)=\b\circ\sigma$, $A_\b\#_\sigma H_\a$ is a Hom-crossed product.

(3) Let $H_4$ be a vector space with a basis $\{1,g,x,y\}$. Define the Hom-Hopf algebra structure on $H_4$ as follows:

multiplication:
$$\begin{array}{c|cccc}
 H_{4}&1&g&x&y\\
\hline 1&1&g&-x&-y\\
  g&g&1&-y&-x\\
  x&-x&y&0&0\\
  y&-y&x&0&0\\
 \end{array}$$
comultipication, counit and antipode:
\begin{eqnarray*}
&&\Delta(1)=1\otimes 1,\ \Delta(g)=g\otimes g,\\
&&\Delta(x)=(-x)\otimes g+1\otimes (-x),\ \Delta(y)=(-y)\o 1+g\o(-y),\\
&&\varepsilon(1)=1,\ \varepsilon(g)=1,\ \varepsilon(x)=0,\ \varepsilon(y)=0,\\
&&S(1_{H})=1_{H},\ S(g)=g,\ S(x)=y,\ S(y)=-x.
\end{eqnarray*}
The automorphism $\a:H_4\rightarrow H_4$ is given by
$$\alpha(1_{H})=1_{H},\ \alpha(g)=g,\ \alpha(x)=-x,\ \alpha(y)=-y.$$
Let $k[a]$ be the polynomial algebra with the indeterminant $a$ and $k[a]/(a^2)$ be the quotient algebra. Consider the Hom-algebra $(k[a]/(a^2),id)$ and define the action of $H_4$ on $k[a]/(a^2)$ by
$$h\cdot 1=\varepsilon(h)1,\ 1\cdot a=a,\ g\cdot a=a,\ x\cdot a=0,\ y\cdot a=0.$$

For any $t\in k$, define a linear map $\sigma:H_4\o H_4\rightarrow k[a]/(a^2)$ by
$$\begin{array}{|c| c c c c|}
\hline   \sigma  & 1 & g & x & y \\
\hline 1         & 1 & 1 & 0 & 0 \\
       g         & 1& 1 & 0 & 0 \\
       x         & 0 & 0 &  \frac{t}{2} & -\frac{t}{2}\\
       y        &  0  & 0 & \frac{t}{2} & -\frac{t}{2}\\
 \hline
 \end{array}$$
Easy to see that $\sigma$ satisfies the conditions in Proposition 2.2. Thus we have a crossed product $k[a]/(a^2)\#_\sigma H$ with the multiplication:
$$\begin{array}{|c| c c c c c c c c|}
\hline   \bullet     & 1\#1 & 1\#g & 1\#x & 1\#y & a\#1 & a\#g & a\#x & a\#y\\
\hline 1\#1         & 1\#1 & 1\#g & -1\#x  & -1\#y & a\#1 & a\#g & -a\#x & -a\#y\\
       1\#g         & 1\#g & 1\#1 & -1\#y & -1\#x & a\#g & a\#1 & -a\#y & -a\#x\\
       1\#x         & -1\#x & 1\#y &  0 & 0  &-a\#x & a\#y & -\frac{t}{2}a\#1 & -\frac{t}{2}a\#g\\
       1\#y         & -1\#y  & 1\#x & 0 & 0 &-a\#y & a\#x & -\frac{t}{2}a\#g & \frac{t}{2}a\#1\\
       a\#1         & a\#1   & a\#g & -a\#x & -a\#y & 0 & 0 & 0 & 0\\
       a\#g         & a\#g    & a\#1 &-a\#y & -a\#y & 0 & 0 & 0& 0\\
       a\#x         & -a\#x & a\#y & \frac{t}{2}a\#1 & -\frac{t}{2}a\#g & 0 & 0 & 0& 0 \\
       a\#y         & a\#y  & a\#x &\frac{t}{2}a\#g &-\frac{t}{2}a\#1 & 0 & 0 & 0& 0\\
 \hline
 \end{array}$$

\end{example}

\begin{lemma}
Especially we have the following identities:
\begin{itemize}
  \item [(1)]
  For all $h,g,l\in H$,
  \begin{eqnarray*}
  (i)~\lefteqn{h\cdot\sigma(g,l)=}\nonumber\\
  &[\sigma(\a^{-3}(h_{11}),\a^{-3}(g_{11}))\sigma(\a^{-4}(h_{12}l_{12}),\a^{-2}(l_1))]\sigma^{-1}(\a^{-1}(h_2),\a^{-2}(g_2l_2)),\\
(ii)~\lefteqn{h\cdot\sigma^{-1}(g,l)=}\nonumber\\
&\sigma(\a^{-1}(h_1),\a^{-2}(g_1l_1))[\sigma^{-1}(\a^{-4}(h_{21}g_{21}),\a^{-2}(l_2))\sigma^{-1}(\a^{-3}(h_{22}),\a^{-3}(g_{22}))].
\end{eqnarray*}
  \item [(2)] For all $a,b\in A$ and $h,g\in H$,
             \begin{itemize}
               \item [(i)] $(a\#1)(b\#1)=ab\#1$, $(1\#h)(1\#g)=\sigma(h_1,g_1)\#\a^{-1}(h_2g_2)$,
               \item [(ii)] $(1\#h)(a\#1)=\a^{-1}(h_1)\cdot a\#h_2$,  $(a\#1)(1\#h)=\b(a)\#\a(h)$.
\end{itemize}
\end{itemize}
\end{lemma}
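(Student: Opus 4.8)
The plan is to dispatch the two parts by different mechanisms: part (2) is a direct evaluation of the multiplication law of Proposition 2.2 on the special elements $a\#1$ and $1\#h$, whereas part (1) is obtained by rearranging the $2$-cocycle condition (3) of Proposition 2.2 against the convolution inverse $\sigma^{-1}$.

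For part (2) I would substitute the relevant elements into
\[(a\#h)(b\#g)=a[(\a^{-4}(h_{11})\c \b^{-2}(b))\sigma(\a^{-3}(h_{12}),\a^{-2}(g_{1}))]\#\a^{-1}(h_2g_2)\]
and collapse the result using: the normalization $\sigma(h,1)=\sigma(1,h)=\varepsilon_H(h)1$ and $\sigma\circ(\a\o\a)=\b\circ\sigma$ of condition (2); the twisted-module identity $1\c a=\b(a)$ of condition (1); and the Hom-unit axioms $a1_A=\b(a)$ in $A$ together with $h1_H=\a(h)$ in $H$. For example, in $(1\#h)(a\#1)$ the cocycle factor $\sigma(\a^{-3}(h_{12}),\a^{-2}(1_H))$ reduces to $\varepsilon(h_{12})1_A$, after which the counit relation $\sum\varepsilon(h_{12})h_{11}=\a(h_1)$ absorbs two of the four inverse powers of $\a$ in the $A$-slot, while $h_2 1_H=\a(h_2)$ converts the $H$-component $\a^{-1}(h_2 g_2)\big|_{g=1}$ into $h_2$; this yields exactly $\a^{-1}(h_1)\c a\# h_2$. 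The other three formulas come out in the same mechanical way, the only point of care being whether a disappearing unit is compensated by a factor $\a$ (in $H$) or a factor $\b$ (in $A$).

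For part (1)(i) I would begin from condition (3), relabelled as
\[(h_1\c\sigma(g_1,l_1))\,\sigma(\a(h_2),g_2 l_2)=\sigma(\a(h_1),\a(g_1))\,\sigma(h_2 g_2,\a^2(l)),\]
and isolate the single term $h\c\sigma(g,l)$ by cancelling the right-hand factor $\sigma(\a(h_2),g_2 l_2)$ on the left. Concretely I would comultiply once more, multiply both sides in $A$ by a matching copy $\sigma^{-1}(\a(h_3),g_3 l_3)$, and invoke the Hom-convolution inverse relation to collapse the resulting $\sigma*\sigma^{-1}$ pair to the counit; the three surviving $\sigma$-factors on the right then reassemble, once the $\a$-twists are redistributed through $\b(h\c a)=\a(h)\c\b(a)$ and $\sigma\circ(\a\o\a)=\b\circ\sigma$, into the advertised product $[\sigma(\cdots)\sigma(\cdots)]\sigma^{-1}(\cdots)$ with the stated powers of $\a$. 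For part (1)(ii) I would avoid repeating this and instead apply the action of $h$ to the inverse relation $\sigma(g_1,l_1)\sigma^{-1}(g_2,l_2)=\varepsilon(g)\varepsilon(l)1_A$ (in its appropriately $\a$-twisted form): by the module-algebra axiom $\a^2(h)\c(ab)=(h_1\c a)(h_2\c b)$ of Definition 2.1 the left side splits as $(h_1\c\sigma(\cdots))(h_2\c\sigma^{-1}(\cdots))$, and substituting the already-proven identity (i) for the first factor and convolving once more with $\sigma^{-1}$ solves for $h\c\sigma^{-1}(g,l)$.

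The step I expect to be the main obstacle is the $\a$-twist bookkeeping in part (1). Stripped of the endomorphism $\a$, these are the familiar rearrangements of a Hopf $2$-cocycle into a convolution of three copies of $\sigma^{\pm1}$; the genuine work here is that every invocation of Hom-coassociativity $(\D\o\a)\D=(\a\o\D)\D$, of the inverse relation, and of the weak action recalibrates the exponent of $\a$, so one must check that the accumulated twists on the two sides agree and reproduce precisely the exponents $\a^{-3},\a^{-4},\a^{-2},\a^{-1}$ recorded in the statement. Part (2), by contrast, I expect to be entirely routine.
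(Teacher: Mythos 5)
Your outline is correct and is precisely the computation the paper leaves as ``Straightforward'': part (2) is direct substitution of $h=1$, $g=1$, or $a=b=1$ into the multiplication of Proposition 2.2, collapsed via the normality of $\sigma$, the twisted-module identity $1\cdot a=\b(a)$, and the Hom-unit/counit axioms, while part (1) is the standard convolution-inverse rearrangement of cocycle condition (3) (and of the inverse relation $\sigma\ast\sigma^{-1}=\varepsilon\o\varepsilon$ for (ii)). One remark from carrying out your derivation of (1)(i): the middle factor comes out as $\sigma(\a^{-4}(h_{12}g_{12}),\a^{-2}(l_1))$, so the $l_{12}$ printed in the statement appears to be a typo for $g_{12}$.
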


\begin{proof}
Straightforward.
\end{proof}

Let $(H,\a)$ be a Hom-bialgebra and $(M,\mu)$ a right Hom-$H$-comodule via $\r$, then the coinvariant subcomodule $M^{coH}=\{m\in M|\r(m)=\mu(m)\o1\}$.

\begin{definition}
Let $(H,\a)$ be a Hom-bialgebra, $(B,\b)$ a right Hom-$H$-comodule algebra and $A=B^{coH}$. We say $A\subseteq B$ is a cleft extension if there exists a right $H$-comodule map $\g:H\rightarrow B$ which is convolution invertible.
\end{definition}

Without generalization we can assume $\g(1)=1$ in what follows.

\begin{lemma}
Assume that $(B,\beta)$ is a right $H$-Hom-comodule algebra, via $\rho:B\rightarrow B\otimes H,~b\mapsto b_{(0)}\o b_{(1)}$, and that $A\subset B$ is a $H$-cleft extension via $\g$. Then
\begin{itemize}
\item [(1)]
$\rho\circ\g^{-1}=(\g^{-1}\otimes S)\circ\tau\circ\Delta$
\item [(2)]
$b_{(0)}\g^{-1}(b_{(1)})\in A$ for any $b\in B.$
\end{itemize}
\end{lemma}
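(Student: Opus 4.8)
The plan is to establish each part by direct computation in the Hom-setting, using the comodule-algebra structure maps together with the defining property of a cleft extension, namely that $\gamma$ is a right $H$-comodule map with convolution inverse $\gamma^{-1}$. Throughout I would write $\rho(b)=b_{(0)}\otimes b_{(1)}$ and use the comodule axioms from Section~1, especially $(\rho\otimes\alpha)\circ\rho=(\beta\otimes\Delta)\circ\rho$ and $(id\otimes\varepsilon)\circ\rho=\beta$, as well as the fact that $\rho$ is an algebra map so that $\rho(1)=1\otimes 1$.

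For part~(1), the strategy is to compute $\rho\circ\gamma^{-1}$ by exploiting the convolution identity $\gamma*\gamma^{-1}=\varepsilon 1$ together with the comodule-map property of $\gamma$, which reads $\rho\circ\gamma=(\gamma\otimes id)\circ\Delta$, i.e. $\rho(\gamma(h))=\gamma(h_1)\otimes h_2$. First I would apply $\rho$ to the identity $\gamma^{-1}(h_1)\gamma(h_2)=\varepsilon(h)1$ (or its mirror), using that $\rho$ is multiplicative, and insert the known value of $\rho\circ\gamma$. This produces a relation involving $\rho(\gamma^{-1}(h_1))$ against $\gamma(h_2)\otimes h_3$-type terms; then I would convolve on the right with $\gamma^{-1}$ in the appropriate factor to isolate $\rho\circ\gamma^{-1}$ and reconstruct the claimed formula $(\gamma^{-1}\otimes S)\circ\tau\circ\Delta$, where $\tau$ is the flip. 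The antipode $S$ enters precisely because inverting the $H$-leg of the comultiplication forces the standard antipode identity $S(h_1)h_2=\varepsilon(h)1$; one must track the $\alpha$-twists carefully so that the Hom-coassociativity $(\Delta\otimes\alpha)\circ\Delta=(\alpha\otimes\Delta)\circ\Delta$ is applied in the right slot.

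For part~(2), I would show that $\rho\bigl(b_{(0)}\gamma^{-1}(b_{(1)})\bigr)=\beta\bigl(b_{(0)}\gamma^{-1}(b_{(1)})\bigr)\otimes 1$, which is exactly the coinvariance condition defining $A=B^{coH}$. The computation starts by using multiplicativity of $\rho$ to split $\rho(b_{(0)}\gamma^{-1}(b_{(1)}))=\rho(b_{(0)})\,\rho(\gamma^{-1}(b_{(1)}))$, then substitutes the result of part~(1) for $\rho\circ\gamma^{-1}$ and expands $\rho(b_{(0)})=b_{(0)(0)}\otimes b_{(0)(1)}$ via the comodule coassociativity. The $H$-components then collapse: one obtains a product of the form $b_{(1)}$-leg against $S$ of another $b_{(1)}$-leg, and applying the antipode relation $h_1 S(h_2)=\varepsilon(h)1$ (again with the correct $\alpha$-powers) forces the $H$-tensorand to become $1$ after the counit absorbs the remaining structure constants.

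I expect the main obstacle to be bookkeeping of the $\alpha$-twists rather than any conceptual difficulty: in the Hom-setting every application of coassociativity, of the comodule axiom, and of the antipode axiom carries powers of $\alpha$ (and $\beta$) that do not appear in the classical case, so the challenge is to insert $\alpha^{-1}$'s and $\beta^{-1}$'s so that after using $S(h_1)h_2=\varepsilon(h)1$ the surviving twists cancel exactly and reproduce $\beta$ on the $B$-leg and $1$ on the $H$-leg. The one place where a genuine choice is needed is the order in which $\gamma$ and $\gamma^{-1}$ are convolved when isolating $\rho\circ\gamma^{-1}$; picking the correct side is what makes $S$ (and not a would-be inverse antipode) appear, and I would verify this by checking the formula against the normalization $\gamma(1)=1$, $\gamma^{-1}(1)=1$.
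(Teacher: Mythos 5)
Your proposal is correct and follows essentially the same route as the paper: for (1) you identify $\rho\circ\g^{-1}$ as the convolution inverse of $\rho\circ\g=(\g\otimes id)\circ\Delta$ (using that $\rho$ is multiplicative) and pin it down against the candidate $(\g^{-1}\otimes S)\circ\tau\circ\Delta$ via the antipode identity and uniqueness of convolution inverses, and for (2) you split $\rho(b_{(0)}\g^{-1}(b_{(1)}))$ by multiplicativity, insert the formula from (1), and collapse the $H$-leg with $h_1S(h_2)=\varepsilon(h)1$. This matches the paper's argument, with only the cosmetic difference that the paper verifies the candidate is an inverse rather than solving for it.
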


\begin{proof}
(1) Since $\rho$ is an algebra map, $\rho\circ\g^{-1}$ is the inverse of $\rho\circ\g=(\g\otimes id)\Delta.$ Let $\lambda=(\g^{-1}\otimes S)\circ\tau\circ\Delta$. Then for any $h\in H$
$$\begin{aligned}
((\rho\circ\g)\ast\lambda)(h)&=[(\g\otimes id)\Delta(h_{1})][(\g^{-1}\otimes S)\circ\tau\circ\Delta(h_{2})]\\
                                 &=[\g(h_{11})\otimes h_{12}][\g^{-1}(h_{22})\otimes S(h_{21})]\\
                                 &=\g(h_{11})\g^{-1}(h_{22})\otimes h_{12}S(h_{21})\\
                                 &=\g(\a(h_{1}))\g^{-1}(h_{22})\o\varepsilon_H(h_{21})1\\
                                 &=\g(\a(h_{1}))\g^{-1}(\a(h_{2}))\otimes 1\\
                                 &=\varepsilon_H(h)1\otimes 1.
\end{aligned}$$
Thus $\lambda=\rho\circ\g^{-1}$ by the uniqueness of inverses.

(2) For any $b\in B$
$$\begin{aligned}
\rho(b_{(0)}\g^{-1}(b_{(1)}))&=\rho(b_{(0)})\rho(\g^{-1}(b_{(1)}))\\
                                     &=(b_{(0)(0)}\otimes b_{(0)(1)})(\g^{-1}(b_{(1)2})\otimes S(b_{(1)1}))\\
                                     &= b_{(0)(0)}\g^{-1}(b_{(1)2})\otimes b_{(0)(1)}S(b_{(1)1})\\
                                     &= \beta(b_{(0)})\g^{-1}(b_{(1)2})\otimes\varepsilon_H(b_{(1)1})1\\
                                     &= \beta(b_{(0)})\g^{-1}(\alpha(b_{(1)}))\otimes 1\\                                   &=\beta(b_{(0)}\g^{-1}(b_{(1)}))\otimes 1.
\end{aligned}$$
The proof is completed.
\end{proof}

\begin{proposition}
Let $A\subset B$ be right $H$-cleft extension via $\g:H\rightarrow B$. Then there is a crossed product action of $H$ on $A$ given by
$$h\cdot a=(\g(\a^{-2}(h_{1}))\beta^{-1}(a))\g^{-1}(\alpha^{-1}(h_{2})),$$
and a convolution inverse map $\sigma:H\otimes H\rightarrow A$ given by
$$\sigma(h,g)=(\g(\a^{-3}(h_{1}))\g(\a^{-3}(g_{1})))\g^{-1}(\a^{-3}(h_{2}g_{2})).$$
Then we have the crossed product $A\#_{\sigma}H$. Moreover $\Phi:A\#_{\sigma}H\rightarrow B,\ a\#h\mapsto \b^{-2}(a)\g(\a^{-2}(h))$ is a Hom-algebra isomorphism. Moreover $\Phi$ is both a left $A$-module and right $H$-comodule map, where $a\cdot(b\#h)=\b(a)b\#\alpha(h)$ and $(a\#h)_{(0)}\o(a\#h)_{(1)}=\b(a)\#h_{1}\otimes\a^{-1}(h_{2})$.
\end{proposition}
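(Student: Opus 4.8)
The plan is to verify each of the four claims in Proposition 2.9 in turn, exploiting the cleft structure established in Lemma 2.8 and the crossed-product identities from Lemma 2.6. First I would confirm that the proposed action and cocycle actually satisfy the axioms of a Hom-crossed product, i.e. conditions (1)--(3) of Proposition 2.2, so that $A\#_\sigma H$ is a well-defined Hom-algebra; here the key inputs are that $\g$ is a convolution-invertible right $H$-comodule map with $\g(1)=1$, together with the formula $\rho\circ\g^{-1}=(\g^{-1}\o S)\circ\tau\circ\D$ from Lemma 2.8(1). The normality of $\sigma$ and the twisted-module condition for the action should fall out by inserting $1$'s and using $\g\ast\g^{-1}=\v 1$ and the Hom-coassociativity relations.

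Next I would establish the isomorphism $\Phi(a\#h)=\b^{-2}(a)\g(\a^{-2}(h))$. The natural route is to exhibit an explicit two-sided inverse, the obvious candidate being $\Psi(b)=b_{(0)}\g^{-1}(b_{(1)})\#\,b_{(1)}$ (suitably twisted by powers of $\b,\a$), which lands in $A\#_\sigma H$ precisely because Lemma 2.8(2) guarantees $b_{(0)}\g^{-1}(b_{(1)})\in A=B^{coH}$. I would then check $\Phi\circ\Psi=\mathrm{id}_B$ and $\Psi\circ\Phi=\mathrm{id}_{A\#_\sigma H}$, using $\g\ast\g^{-1}=\v 1$ for one composite and the comodule-map property of $\g$ for the other. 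The multiplicativity of $\Phi$ is where the crossed-product multiplication must reproduce exactly the product in $B$; I expect to split the verification using Lemma 2.6(2), which decomposes a general product $(a\#h)(b\#g)$ into the four building blocks $(a\#1)(b\#1)$, $(1\#h)(1\#g)$, $(1\#h)(a\#1)$, $(a\#1)(1\#h)$, and then match each against $\g$ applied through $B$.

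Finally I would verify the module and comodule compatibilities. For the left $A$-module structure $a\cdot(b\#h)=\b(a)b\#\a(h)$, checking $\Phi(a\cdot(b\#h))=\b^{-2}(a)\Phi(b\#h)$ reduces to the Hom-associativity of the multiplication in $B$ and the rescaling by $\b^{-2}$. For the right $H$-comodule map property with $(a\#h)_{(0)}\o(a\#h)_{(1)}=\b(a)\#h_1\o\a^{-1}(h_2)$, I would compute $\rho(\Phi(a\#h))=\rho(\b^{-2}(a)\g(\a^{-2}(h)))$, use that $a\in A$ is coinvariant so $\rho(\b^{-2}(a))=\b^{-1}(a)\o 1$, and that $\g$ is a comodule map so $\rho(\g(\a^{-2}(h)))=\g(\a^{-2}(h_1))\o\a^{-2}(h_2)$; the Hom-comodule coassociativity relations then realign the tensor factors to match $(\Phi\o\mathrm{id})$ applied to the claimed coaction.

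The main obstacle will be the multiplicativity of $\Phi$: unlike the classical case, every occurrence of the action, the cocycle and the convolution inverse carries its own power of $\a$ or $\b$, so the bookkeeping of these twists must be tracked carefully through the four-term decomposition, and it is precisely in reconciling these powers that the weak-action axiom $\b(h\cdot a)=\a(h)\cdot\b(a)$ and the structural relation $\sigma\circ(\a\o\a)=\b\circ\sigma$ will have to be invoked at exactly the right points.
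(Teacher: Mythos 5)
Your proposal is correct in substance, and its core --- the explicit inverse $\Psi(b)=\b^{-2}(b_{(0)(0)}\g^{-1}(b_{(0)(1)}))\#\,b_{(1)}$ justified by the coinvariance lemma, the mutual-inverse checks via $\g\ast\g^{-1}=\v 1$ and the comodule-map property, and the left $A$-module and right $H$-comodule verifications --- coincides with the paper's proof. The differences lie in how multiplicativity and the crossed-product axioms are organized. The paper never verifies conditions (1)--(3) of Proposition 2.2 directly: it proves that $\Phi$ is a multiplicative bijection onto the Hom-associative algebra $B$ by a single direct computation, anchored on the auxiliary identity
\[
(\a^{-4}(h_{1})\cdot \b^{-2}(b))\,\sigma(\a^{-3}(h_{2}),\a^{-2}(g))
=[\g(\a^{-4}(h_1))\b^{-2}(b)][\g(\a^{-4}(g_1))\g^{-1}(\a^{-5}(h_2g_2))],
\]
after which a $\g^{-1}\ast\g$ pair telescopes; Hom-associativity of $A\#_\sigma H$, and hence the twisted-module and cocycle conditions, then come for free by transport of structure. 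Your route --- verify (1)--(3) first, then prove multiplicativity by decomposing $(a\#h)(b\#g)$ through the four generator products --- is logically sound but costlier: the direct verification of the 2-cocycle identity is itself the longest computation in sight, and the generator decomposition is delicate in the Hom setting because $a\#h=(\b^{-1}(a)\#1)(1\#\a^{-1}(h))$ and each regrouping under $\a(x)(yz)=(xy)\a(z)$ injects further powers of $\b\o\a$ that must be tracked. One small omission in your plan: before anything else one must check that $h\cdot a$ and $\sigma(h,g)$, which a priori live in $B$, actually land in $A=B^{coH}$; this is the same kind of computation (via $\rho\circ\g^{-1}=(\g^{-1}\o S)\circ\tau\circ\D$) that you invoke for $\Psi$, and the paper opens its proof with exactly these two verifications, whereas you cite the coinvariance lemma only for $\Psi$.
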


\begin{proof}
Define the linear map $\Psi:B\rightarrow A\#_{\sigma}H$ by
$$b\mapsto\b^{-2}(b_{(0)(0)}\g^{-1}(b_{(0)(1)}))\#b_{(1)}.
$$
First of all we need to show $h\cdot a$ and $\sigma(h,g)$ belong to $A$ for all $a\in A,h,g\in H$. Indeed
$$\begin{aligned}
\r(h\cdot a)&=\r((\g(\a^{-2}(h_{1}))\beta^{-1}(a))\g^{-1}(\a^{-1}(h_{2})))\\
            &=[\r(\g(\a^{-2}(h_{1}))\r(\beta^{-1}(a)))] \r(\g^{-1}(\a^{-1}(h_{2})))\\
            &=[(\g(\a^{-2}(h_{11}))\otimes \a^{-2}(h_{12}))(a\otimes 1)](\g^{-1}(\a^{-1}(h_{22}))\o S(\a^{-1}(h_{21})))\\
              &=(\g(\a^{-2}(h_{11}))a)(\g^{-1}(\a^{-1}(h_{22})))\otimes\a^{-1}(h_{12})S(\a^{-1}(h_{21}))\\
              &=(\g(\a^{-1}(h_{1}))a)\beta(\g^{-1}(\a^{-1}(h_{22})))\otimes\varepsilon(h_{21})1\\
              &=(\g(\a^{-1}(h_{1}))a)\g^{-1}(h_{2})\otimes1\\
              &=\b(h\cdot a)\otimes1,
\end{aligned}$$
Thus $h\cdot a\in A$, and easy to see $H$ weakly acts on $A$.

And
$$\begin{aligned}
\rho(\sigma(h,g))&= (\rho\g(\a^{-3}(h_{1}))\rho\g(\a^{-3}(g_{1})))\rho\g^{-1}(\a^{-3}(h_2g_2))\\
                 &=(\b^{-3}\o\a^{-3})[(\g(h_{11})\otimes h_{12}))(\g(g_{11})\otimes g_{12})][\g^{-1}(h_{22}g_{22})\otimes S(h_{21}g_{21})]\\
                 &=(\b^{-3}\o\a^{-3})[(\g(h_{11})\g(g_{11}))\g^{-1}(h_{22}g_{22})\otimes (h_{12}g_{12})S(h_{21}g_{21})]\\
                 &=(\b^{-3}\o\a^{-3})[(\g(\a(h_{1}))\g(\a(g_{1})))\g^{-1}(\a(h_{2}g_{2}))\otimes 1]\\
                 &=\b(\sigma(h,g))\otimes 1.
\end{aligned}$$
Hence $\sigma(h,g)\in A.$

Next we show that $\Phi$ and $\Psi$ are mutual inverses. First for all $b\in B$,
$$\begin{aligned}
\Phi\Psi(b)&=\Phi(\b^{-2}(b_{(0)(0)}\g^{-1}(b_{(0)(1)}))\#b_{(1)})\\
           &=\b^{-4}(b_{(0)(0)}\g^{-1}(b_{(0)(1)})\g(\a^{-2}(b_{(1)}))\\
           &=\b^{-2}(b_{(0)})[\b^{-4}(\g^{-1}(b_{(1)1}))\g(\a^{-4}(b_{(1)2}))]\\
           &=b,
\end{aligned}$$
and for all $a\in A,h\in H$,
$$\begin{aligned}
\Psi\Phi(a\#h)&=\Psi(\b^{-2}(a)\g(\a^{-2}(h)))\\
              &=\b^{-2}[(a\g(\a^{-2}(h_{11})))\g^{-1}(\a^{-1}(h_{12}))]\#\a^{-1}(h_2)\\
              &=\b^{-2}[\b(a)(\g(\a^{-2}(h_{11}))\g^{-1}(\a^{-2}(h_{12})))]\#\a^{-1}(h_2)\\
              &=a\#h.
\end{aligned}$$
Thus $\Phi$ and $\Psi$ are mutual inverses. Moreover for all $a,b\in A$ and $h,g\in H$, by a direct computation
\begin{eqnarray}
&&(\a^{-4}(h_{1})\cdot \b^{-2}(b))\sigma(\a^{-3}(h_{2}),\a^{-2}(g))\nonumber\\
=&&[\g(\a^{-4}(h_1))\b^{-2}(b)][\g(\a^{-4}(g_1))\g^{-1}(\a^{-5}(h_2g_2))],
\end{eqnarray}
then
$$\begin{aligned}
&~\Phi((a\#h)(b\#g))\\
&=\b^{-2}\{a[(\a^{-4}(h_{11})\cdot \b^{-2}(b))\sigma(\a^{-3}(h_{12}),\a^{-2}(g_{1}))]\}\g(\a^{-3}(h_2g_2))\\
&\stackrel{(2.1)}{=}\b^{-2}\{a[[\g(\a^{-4}(h_{11}))\b^{-2}(b)][\g(\a^{-4}(g_{11}))\g^{-1}(\a^{-5}(h_{12}g_{12}))]]\}\g(\a^{-3}(h_2g_2))\\
&=\b^{-2}\{a[[(\g(\a^{-5}(h_{11}))\b^{-3}(b))\g(\a^{-4}(g_{11}))][\g^{-1}(\a^{-4}(h_{12}g_{12}))]]\}\g(\a^{-3}(h_2g_2))\\
&=\{\b^{-2}(a)[(\g(\a^{-6}(h_{11}))\b^{-4}(b))\g(\a^{-5}(g_{11}))]\}\{\g^{-1}(\a^{-5}(h_{12}g_{12}))\g(\a^{-4}(h_2g_2))\}\\
&=\b^{-1}(a)[(\g(\a^{-3}(h))\b^{-3}(b))\g(\a^{-2}(g))]\\
&=[\b^{-2}(a)\g(\a^{-2}(h))][\b^{-2}(b)\g(\a^{-2}(g))]\\
&=\Phi(a\#h)\Phi(b\#g).
\end{aligned}$$
Finally we need to check that $\Phi$ is both a left $A$-module and right $H$-comodule map. For all $h\in H$ and $a,b\in A$,
$$\begin{aligned}
\Phi(a\cdot b\#h)=(\b^{-1}(a)\b^{-2}(b))\g(\a^{-1}(h))=a(\b^{-2}(b)\g(\a^{-2}(h)))=a\cdot\Phi(b\#h),
\end{aligned}$$
and
$$\begin{aligned}
\Phi(a\#h)_{(0)}\o\Phi(a\#h)_{(1)}&=\b^{-1}(a)\g(\a^{-2}(h_1))\o\a^{-1}(h_2)\\
                                  &=\Phi(\b(a)\#h_1)\o\a^{-1}(h_2)\\
                                  &=\Phi((a\#h)_{(0)})\o(a\#h)_{(1)}.
\end{aligned}$$
The proof is completed.
\end{proof}

\begin{proposition}
Let $(A\#_\sigma H,\b\o\a)$ be a Hom-crossed product. Define the map $\g:H\rightarrow A\#_\sigma H$ by $\g(h)=1\#\a(h)$. Then $\g$ is convolution invertible right $H$-comodule map.
\end{proposition}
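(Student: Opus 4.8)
The plan is to verify directly that $\g(h) = 1\#\a(h)$ is a right $H$-comodule map, then exhibit its convolution inverse explicitly and confirm the inverse identities. The comodule structure on the crossed product is the one recorded at the end of Proposition~2.9, namely $(a\#h)_{(0)}\o(a\#h)_{(1)} = \b(a)\#h_1 \o \a^{-1}(h_2)$. So first I would compute $\rho(\g(h)) = \rho(1\#\a(h))$ using this formula, getting $1\#\a(h)_1 \o \a^{-1}(\a(h)_2)$. Applying the Hom-coalgebra compatibility $(\a\o\a)\circ\Delta = \Delta\circ\a$ to rewrite $\Delta(\a(h))$ in terms of $\Delta(h)$, this should collapse to $\g(h_1)\o h_2$ after the appropriate $\a$-shifts, which is exactly the condition $\rho\circ\g = (\g\o id)\circ\Delta$ required for a comodule map; I would also check the trivial compatibility $\rho_{A\#_\sigma H}\circ\g = (\g\o id)\circ\Delta_H$ is the right normalization and that $(\b\o\a)$-equivariance $\rho\circ(\b\o\a) = \cdots$ holds, though this is routine.

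Next I would produce the convolution inverse. The natural candidate is $\g^{-1}(h) = S^{-1}$-free and instead built from the antipode of $H$ together with $\sigma^{-1}$; guided by the formula for $\sigma$ in Proposition~2.9 and the general theory, I expect $\g^{-1}(h) = \sigma^{-1}(\a^{-1}(S(h_2)), \cdots)\#\a(S(h_1))$ or a closely related expression involving $S$ and the convolution inverse $\sigma^{-1}$ of the cocycle. To pin it down I would write out the convolution product $(\g * \g^{-1})(h) = \sum \g(h_1)\g^{-1}(h_2)$ using the crossed-product multiplication from Proposition~2.2 and Lemma~2.7(2)(i), which tells us $(1\#h)(1\#g) = \sigma(h_1,g_1)\#\a^{-1}(h_2 g_2)$. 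Imposing that this equals $\v_H(h)\,1\#1$ forces the defining relation for $\g^{-1}$, and the identity $S(h_1)h_2 = h_1 S(h_2) = \v(h)1$ together with the cocycle normalization $\sigma(h,1)=\sigma(1,h)=\v_H(h)$ from Proposition~2.2(2) should make the telescoping work.

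The main obstacle will be bookkeeping the powers of $\a$ and $\b$ so that the convolution inverse is well-defined and both one-sided inverses coincide. In the Hom setting every multiplication and comultiplication introduces shifts by $\a^{-1}$, and the crossed-product formula in Proposition~2.2 carries factors as deep as $\a^{-4}(h_{11})$, so the candidate $\g^{-1}$ must absorb exactly the right powers for $(\g*\g^{-1})(h)$ and $(\g^{-1}*\g)(h)$ to both reduce to $\v_H(h)1\#1$. I would manage this by repeatedly invoking the coassociativity relation $(\Delta\o\a)\circ\Delta = (\a\o\Delta)\circ\Delta$ and the module/cocycle axioms exactly as they are used in the proof of Proposition~2.2, keeping the $\a$-exponents aligned at each step. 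Once the convolution identities close up on both sides, the uniqueness of convolution inverses gives the result; the comodule-map property is comparatively immediate, so the real work is isolating the correct $\g^{-1}$ and verifying its two-sided inverse property against the twisted multiplication.
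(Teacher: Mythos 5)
Your overall strategy matches the paper's: check the comodule condition directly from $\rho(a\#h)=\b(a)\#h_1\o\a^{-1}(h_2)$ (this part is indeed immediate, since $\rho(1\#\a(h))=1\#\a(h_1)\o h_2=\g(h_1)\o h_2$), and then exhibit an explicit convolution inverse built from $S$ and $\sigma^{-1}$. The paper's inverse is $\l(h)=\sigma^{-1}(S\a^{-1}(h_{21}),\a^{-1}(h_{22}))\#S(h_1)$, which is in the family you guess at but which you never actually pin down; as written your proposal does not contain a proof, only a plan to find one.

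The genuine gap is in how you expect the verification to close. You claim that the antipode identity $S(h_1)h_2=h_1S(h_2)=\v(h)1$ together with the normalization $\sigma(h,1)=\sigma(1,h)=\v(h)$ ``should make the telescoping work,'' and you propose to use Lemma~2.5(2)(i), i.e.\ $(1\#h)(1\#g)=\sigma(h_1,g_1)\#\a^{-1}(h_2g_2)$. That formula only applies when both factors have trivial $A$-component, but $\g^{-1}(h_2)$ necessarily carries the nontrivial $A$-component $\sigma^{-1}(\cdots)$. Consequently the product $(\g\ast\g^{-1})(h)=(1\#\a(h_1))\bigl(\sigma^{-1}(\cdots)\#S(h_{22})\bigr)$ produces, via the full crossed-product multiplication, a term of the form $\a^{-2}(h_{111})\cdot\sigma^{-1}(\cdots)$ --- the weak action applied to a value of $\sigma^{-1}$. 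Resolving this requires Lemma~2.5(1)(ii), which rewrites $h\cdot\sigma^{-1}(g,l)$ as a product of $\sigma$ and $\sigma^{-1}$ terms and is itself a consequence of the twisted-module condition and the $2$-cocycle condition of Proposition~2.2, not of normalization and the antipode axiom alone. This is exactly the step the paper flags (``the fourth identity is obtained by using Lemma 2.5~(1)''), and without it the $\g\ast\g^{-1}$ side does not telescope. The $\g^{-1}\ast\g$ side is easier, since there the second factor is $1\#\a(h_2)$ and the weak action only hits $1_A$. To complete the argument you must (i) commit to the explicit formula for $\g^{-1}$ and (ii) invoke Lemma~2.5(1) to handle the action on $\sigma^{-1}$; also note that convolution invertibility of $\sigma$ itself, which your candidate presupposes, is an additional hypothesis rather than part of the definition of the crossed product.
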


\begin{proof}
First of all for all $h\in H$, $\g\circ\a=(\b\o\a)\circ\g$ and
$$\g(h)_{(0)}\o\g(h)_{1}=1\#\a(h_1)\o h_2=\g(h_1)\o h_2,$$
which means that $\g$ is right $H$-comodule map.

Define a linear map $\l:H\rightarrow A\#_\sigma H$ by
$$\l(h)=\sigma^{-1}(S\a^{-1}(h_{21}),\a^{-1}(h_{22}))\#S(h_1).$$
Now we verify that $\l$ is the convolution inverse of $\g$.

For all $h\in H$,
$$\begin{aligned}
(\l\ast\g)(h)&=(\sigma^{-1}(S\a^{-1}(h_{121}),\a^{-1}(h_{122}))\#S(h_{11}))(1\#\a(h_2))\\
             &=\sigma^{-1}(S\a^{-1}(h_{121}),\a^{-1}(h_{122}))\sigma(S\a^{-1}(h_{112}),h_{21})\#S\a^{-1}(h_{111})h_{22}\\
             &=\varepsilon(h_{21})\varepsilon(h_{221})1\#S\a(h_{1})\a^{-1}(h_{222})\\
             &=\varepsilon(h)1\#1,
\end{aligned}$$
and
$$\begin{aligned}
&(\g\ast\l)(h)\\
=&(1\#\a(h_1))(\sigma^{-1}(S\a^{-1}(h_{221}),\a^{-1}(h_{222}))\#S(h_{22}))\\
=&[\a^{-2}(h_{111})\cdot\sigma^{-1}(S\a^{-2}(h_{221}),\a^{-2}(h_{222}))]\sigma(\a^{-1}(h_{12}),S\a^{-1}(h_{212}))\#h_{12}S\a^{-1}(h_{211})\\
=&[\a^{-1}(h_{11})\cdot\sigma^{-1}(S\a^{-2}(h_{221}),\a^{-2}(h_{222}))]\sigma(h_{12},S(h_{21}))\#1\\
=&[\sigma(\a^{-2}(h_{111}),\a^{-4}(S(h_{2212})h_{2221}))(\sigma^{-1}(\a^{-5}(h_{1121})S\a^{-6}(h_{22112}),\a^{-4}(h_{2222}))\\
&\sigma^{-1}(\a^{-4}(h_{1122}),S\a^{-5}(h_{22111})))]\sigma(h_{12},S(h_{21}))\#1\\
=&[\sigma(\a^{-2}(h_{111}),\a^{-5}(S(h_{22211})h_{22212}))\sigma^{-1}(\a^{-4}(h_{1121})S\a^{-4}(h_{2212}),\a^{-3}(h_{2222}))]\\
&[\sigma^{-1}(\a^{-3}(h_{1122}),S\a^{-3}(h_{2211}))\sigma(\a^{-1}(h_{12}),S\a^{-1}(h_{21}))]\#1\\
=&\sigma^{-1}(\a^{-2}(h_{111})S\a^{-3}(h_{2212}),\a^{-1}(h_{222}))\\
&[\sigma^{-1}(\a^{-2}(h_{112}),S\a^{-3}(h_{2211}))\sigma(\a^{-1}(h_{12}),S\a^{-1}(h_{21}))]\#1\\
=&\sigma^{-1}(\a^{-1}(h_{11})S\a^{-2}(h_{212}),h_{22})\varepsilon(h_{12})\varepsilon(h_{211})\\
=&\sigma^{-1}(h_{1}S\a^{-1}(h_{21}),h_{22})\#1\\
=&\varepsilon(h)1\#1,
\end{aligned}$$
where the fourth identity is obtained by using Lemma 2.5 (1).
The proof is completed.

\end{proof}

Note that if $\sigma$ is trivial the crossed product $A\#_\sigma H$ is reduced to smash product $A\#H$. Then $\g$ is invertible with $\g^{-1}(h)=1\#S\a(h)$.

By the above two propositions, we have the main theorem of this section.

\begin{theorem}
If the extension $A\subseteq B$ is cleft if and only if $B\simeq A\#_\sigma H$.
\end{theorem}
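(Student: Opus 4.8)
The plan is to read off the two implications of the equivalence directly from Propositions 2.8 and 2.9, which between them already establish all the structural content; the theorem is then essentially a matter of matching notation.

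For the direction ($\Rightarrow$), I would start from a cleft extension $A\subseteq B$ with cleaving map $\g$. Proposition 2.8 produces from $\g$ the weak action $h\cdot a=(\g(\a^{-2}(h_1))\b^{-1}(a))\g^{-1}(\a^{-1}(h_2))$ and the convolution-invertible map $\sigma(h,g)=(\g(\a^{-3}(h_1))\g(\a^{-3}(g_1)))\g^{-1}(\a^{-3}(h_2g_2))$, checks that both take values in $A=B^{coH}$, and thereby yields a genuine Hom-crossed product $A\#_\sigma H$ (the conditions (1)--(3) of Proposition 2.2 being inherited from the comodule-algebra axioms). The same proposition exhibits $\Phi:A\#_\sigma H\to B$, $a\#h\mapsto\b^{-2}(a)\g(\a^{-2}(h))$, as a Hom-algebra isomorphism that is at the same time a left $A$-module and a right $H$-comodule map. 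Hence $B\simeq A\#_\sigma H$ as right $H$-comodule algebras, which is the assertion.

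For the direction ($\Leftarrow$), suppose $B\simeq A\#_\sigma H$ as right $H$-comodule algebras. Since both a cleaving map and the coinvariant subalgebra are transported along such an isomorphism (an algebra isomorphism preserves convolution products and carries coinvariants to coinvariants), it suffices to show that $A\subseteq A\#_\sigma H$ is itself cleft. I would first record that $(A\#_\sigma H,\b\o\a)$ is a right Hom-$H$-comodule algebra under $\rho(a\#h)=\b(a)\#h_1\otimes\a^{-1}(h_2)$, i.e.\ that $\rho$ is an algebra map obeying the comodule axioms of Section~1, and then identify the coinvariants. The inclusion $A\#1\subseteq(A\#_\sigma H)^{coH}$ is immediate since $\rho(a\#1)=\b(a)\#1\otimes 1=(\b\o\a)(a\#1)\otimes 1$. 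For the reverse inclusion I would write a coinvariant element in a basis $\{e_j\}$ of $A$, cancel $\b$ (which is bijective), use linear independence of the $e_j$ to reduce to $\Delta(\a^{-1}(w_j))=w_j\otimes 1$ for the $H$-components $w_j$, and then apply $(\varepsilon_H\o id)\circ\Delta=\a$ to conclude $w_j=\varepsilon_H(w_j)1$; thus $(A\#_\sigma H)^{coH}=A\#1\cong A$. Finally Proposition 2.9 supplies the map $\g(h)=1\#\a(h)$ and verifies that it is a convolution-invertible right $H$-comodule map, which is exactly the data required by Definition 2.6.

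The main obstacle is the bookkeeping in the ($\Leftarrow$) direction that is not literally quoted from the two propositions: confirming that $A\#_\sigma H$ is a comodule algebra under the stated coaction and, above all, pinning down its coinvariant subalgebra as exactly $A\#1$ rather than something larger. I expect this coinvariant identification to be the one genuinely new (if mild) step, since the delicate part of cleftness, namely the convolution-invertibility of the cleaving map, is already handled by the long computation inside Proposition 2.9.
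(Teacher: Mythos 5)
Your proposal is correct and follows essentially the same route as the paper, which simply deduces the theorem ``directly'' from Propositions 2.8 and 2.9. In fact you supply more detail than the paper does, notably the verification that the coinvariants of $A\#_\sigma H$ are exactly $A\#1$, which the paper leaves implicit.
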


\section{Hom-Hopf algebra structure on $A\#_\sigma H$}
\def\theequation{3.\arabic{equation}}
\setcounter{equation} {0} \hskip\parindent

In this section we will give the necessary and sufficient conditions which make the Hom-crossed product into a Hom-Hopf algebra.

\begin{definition}
Let $(A\t_\sigma H,\b\o\a)$ be a Hom-crossed product, then $\sigma$ is called a twisted comodule cocycle if
\begin{eqnarray}
\b(a_1)\o\a^{-1}(a_{2(-1)})\a(g)\o a_{2(0)}=a_1\sigma(\a^{-2}(a_{2(-1)1}),g_1)\o\a^{-2}(a_{2(-1)2})g_2\o a_{2(0)}
\end{eqnarray}
for all $a\in A$ and $g\in H$.
\end{definition}

Note that when $\sigma$ is trivial, the definition is natural. Then we give the main result of this section.

\begin{proposition}
Let $(H,\a)$ be a Hom-bialgebra and $(A,\b)$ a Hom-algebra and Hom-coalgebra. Suppose that $H$ weakly acts on $A$ and $A$
is a left $H$-Hom-comodule coalgebra with the comodule structure map $\r:A\rightarrow H\o A, a\mapsto a_{(-1)}\o a_{(0)}$. Suppose that $(A\#_\sigma H,\b\o\a)$ is a Hom-crossed product with $\sigma$ being a twisted comodule cocycle, and $(A\#_\sigma H,\b\o\a)$ is a Hom-smash coproduct, then the following conditions are equivalent:
\begin{itemize}
  \item[(1)] $(A\#_\sigma H,\b\o\a)$ is a Hom-bialgebra.
  \item[(2)] The conditions:
  \begin{itemize}
    \item [$A_1.$] \ $\varepsilon_A$ is an Hom-algebra map,
    \item [$A_2.$] \ $\varepsilon_A(h\cdot a)=\varepsilon_H(h)\varepsilon_A(a)$,
    \item [$A_3.$] \ $\sigma$ is a Hom-coalgebra map.
    \item [$A_4.$] \ $\Delta_A(h\cdot a)=(\a^{-2}(h_{11})\cdot\b^{-1}(a_1))\sigma(\a^{-1}(h_{12}),\a^{-1}(a_{2(-1)}))\o h_2\cdot \b^{-1}(b_{2(0)})$,
    \item [$A_5.$] \ $(\a^{-1}(h_1)\cdot a)_{(-1)}\a(h_2)\o (\a^{-1}(h_1)\cdot a)_{(0)}=\a(h_1a_{(-1)})\o h_2\cdot b_{(0)}$,
    \item [$A_6.$] \ $\Delta_A(ab)=a_1[(\a^{-4}(a_{2(-1)1})\cdot\b^{-2}(b_1))\sigma(\a^{-3}(a_{2(-1)2}),\a^{-2}(b_{2(-1)}))]\o \b^{-1}(a_{2(0)}b_{2(0)})$,
    \item [$A_7.$] \ $\sigma(h_1,g_1)_{(-1)}(h_2g_2)\o\sigma(h_1,g_1)_{(0)}=\a(h_1g_1)\o\sigma(\a(h_2),\a(g_2))$,
    \item [$A_8.$] \ $\Delta_A(1)=1\o1,$
    \item [$A_9.$] \ $\r(ab)=\r(a)\r(b),~~\r(1)=1\o 1$.
  \end{itemize}
\end{itemize}
\end{proposition}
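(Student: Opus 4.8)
The plan is to establish the equivalence of the two conditions by treating the bialgebra axioms for $(A\#_\sigma H,\b\o\a)$ one structural identity at a time, and showing that each reduces exactly to one (or a bundled pair) of the nine conditions $A_1)$--$A_9)$. Since the Hom-algebra structure on $A\#_\sigma H$ is already fixed by Proposition 2.2 and the Hom-coalgebra structure is the Hom-smash coproduct recalled in Section 1, the statement ``$(A\#_\sigma H,\b\o\a)$ is a Hom-bialgebra'' means precisely that $\Delta$ and $\varepsilon$ are morphisms of the Hom-associative algebra $A\#_\sigma H$. So the direction $(2)\Rightarrow(1)$ amounts to verifying that $\varepsilon(xy)=\varepsilon(x)\varepsilon(y)$, $\varepsilon(1\#1)=1$, $\Delta(xy)=\Delta(x)\Delta(y)$, and $\Delta(1\#1)=(1\#1)\o(1\#1)$ on generic elements $x=a\#h$, $y=b\#g$, using the conditions as rewriting rules; and $(1)\Rightarrow(2)$ extracts each condition by specializing these global identities.

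First I would dispose of the counit axioms. Using $\varepsilon(a\#h)=\varepsilon_A(a)\varepsilon_H(h)$ together with Lemma 2.5(2), expanding $\varepsilon((a\#h)(b\#g))$ and comparing with $\varepsilon(a\#h)\varepsilon(b\#g)$ forces exactly the multiplicativity of $\varepsilon_A$ together with $\varepsilon_A(h\cdot a)=\varepsilon_H(h)\varepsilon_A(a)$ and the normality of $\sigma$ under $\varepsilon$, i.e.\ conditions $A_1)$ and $A_2)$. The unit-compatibility $\varepsilon(1\#1)=1$ is immediate. Next I would handle $\Delta(1\#1)=1\o1$: since $\Delta$ on the smash coproduct sends $1\#1\mapsto(1\#1)\o(1\#1)$ only when $\r(1)=1\o1$ and $\Delta_A(1)=1\o1$, this isolates $A_8)$ and the unit part of $A_9)$.

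The heart of the proof is the comultiplicativity identity $\Delta((a\#h)(b\#g))=\Delta(a\#h)\Delta(b\#g)$. Here I would compute both sides fully using the smash-coproduct comultiplication formula from Section 1 and the crossed-product multiplication from Proposition 2.2, then match tensor factors. The ``$A$-leg'' of the first tensor slot yields $\Delta_A$ applied to a product and to an action, producing $A_4)$ and $A_6)$; the interplay between the comodule structure $\r$ and the action yields the comodule-action compatibility $A_5)$; the comodule structure on $\sigma$ gives $A_7)$ and the statement that $\sigma$ is a Hom-coalgebra map, $A_3)$; and the multiplicativity of $\r$ completes $A_9)$. The twisted-comodule-cocycle hypothesis (3.1) is what allows the $\sigma$-terms arising from the two sides to be reconciled, so I would invoke it at the step where the middle comodule legs are aligned. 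For $(1)\Rightarrow(2)$ I would run this expansion in reverse, specializing $a,b$ or $h,g$ to $1$ (and using Lemma 2.5 to strip off generators) so that the single global comultiplicativity identity breaks into the individual pieces $A_3),A_4),A_5),A_6),A_7),A_9)$.

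The main obstacle will be the sheer bookkeeping in the comultiplicativity step: both the smash-coproduct $\Delta$ and the crossed-product multiplication carry high powers of $\a^{-1}$ and $\b^{-1}$, and expanding $\Delta$ of a product generates deeply iterated coproducts $h_{1},h_{2},\dots$ whose re-association (via Hom-coassociativity and the compatibility $(\a\o\a)\circ\Delta=\Delta\circ\a$) must be tracked exactly so that the twist powers on each leg match those appearing in $A_4)$--$A_7)$. The subtle point is that several conditions contribute to the \emph{same} tensor slot, so isolating them in the forward direction requires a careful choice of specializations rather than a naive comparison; I expect that feeding $b\#g=1\#1$ or $a\#h=1\#1$ at appropriate moments, and exploiting the normality of $\sigma$ from $A_2)$ already secured, will cleanly separate the action-coalgebra conditions $A_4),A_5)$ from the cocycle-coalgebra conditions $A_3),A_7)$ and from the algebra-coalgebra compatibility $A_6),A_9)$.
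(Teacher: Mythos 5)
Your proposal is correct in substance and lines up with the paper's proof at every point that matters: the counit axioms reduce to $A_1$, $A_2$ (together with the counit half of $A_3$), the unit axiom isolates $A_8$ and the unit part of $A_9$, the comultiplicativity of the product is the heart of the argument, the twisted-comodule-cocycle hypothesis (3.1) is invoked exactly where the $\sigma$-legs must be reconciled, and the direction $(1)\Rightarrow(2)$ is obtained by specialization (the paper simply cites Radford's Theorem~1 for that direction rather than writing it out). The one genuine divergence is in how the identity $\Delta(xy)=\Delta(x)\Delta(y)$ is organized for $(2)\Rightarrow(1)$: you propose to expand both sides on generic $x=a\#h$, $y=b\#g$ and match tensor factors, whereas the paper first uses the factorization $a\#h=(\b^{-1}(a)\#1)(1\#\a^{-1}(h))$ coming from Lemma~2.5(2) to reduce the verification to the four products $\Delta\bigl((a\#1)(b\#1)\bigr)$, $\Delta\bigl((a\#1)(1\#g)\bigr)$, $\Delta\bigl((1\#h)(b\#1)\bigr)$, $\Delta\bigl((1\#h)(1\#g)\bigr)$, each of which calls on exactly one cluster of hypotheses ($A_6$ with $A_9$; the cocycle condition (3.1); $A_4$ with $A_5$; $A_3$ with $A_7$, respectively). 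This generator reduction is precisely what tames the bookkeeping you flag as the main obstacle: on generic elements the crossed-product multiplication and the smash-coproduct comultiplication each inject several powers of $\a^{-1},\b^{-1}$ and deeply iterated Sweedler components, and aligning them all at once is far more painful than treating the four cases separately. Your plan would go through as stated, but you should adopt the reduction to products of $a\#1$ and $1\#h$ (justified by Hom-associativity together with the factorization above) to make the computation finite and clean, and you gain the bonus that the correspondence between individual conditions and individual cases becomes visible rather than entangled in a single large identity.
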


\begin{proof}
$(1)\Rightarrow(2)$ follows from the similar calculations to those of \cite[Theorem 1]{R}. So we need only to show $(2)\Rightarrow(1)$. Assume (2) holds, then by $A_1$ and $A_2$, $\varepsilon$ is a Hom-algebra map. By $A_8$ and $A_9$,
$\Delta(1\#1)=1\#1\o1\#1$. In order to prove $\Delta((a\o h)(b\o g))=\Delta(a\o h)\Delta(b\o g)$, it is enough to verify the following relations:
\begin{eqnarray*}
&&(i)\ \Delta((a\o1)(b\o1))=\Delta(a\o1)\Delta(b\o1),\\
&&(ii)\ \Delta((a\o1)(1\o g))=\Delta(a\o1)\Delta(1\o g),\\
&&(iii)\ \Delta((1\o h)(b\o1))=\Delta(1\o h)\Delta(b\o1),\\
&&(iv)\ \Delta((1\o h)(1\o g))=\Delta(1\o h)\Delta(1\o g).
\end{eqnarray*}
In fact
$$\begin{aligned}
&\Delta(a\o1)\Delta(b\o1)\\
=&[a_1\t\a^{-1}(a_{2(-1)})\o\b^{-1}(a_{2(0)})\t 1][b_1\t\a^{-1}(b_{2(-1)})\o\b^{-1}(b_{2(0)})\t 1]\\
=&a_1[(\a^{-5}(a_{2(-1)11})\cdot \b^{-2}(b_1))\sigma(\a^{-4}(a_{2(-1)12}),\a^{-3}(b_{2(-1)1}))]\#\a^{-2}(a_{2(-1)2}b_{2(-1)2})\\
&\o\b^{-1}(a_{2(0)}b_{2(0)})\#1\\
=&a_1[(\a^{-4}(a_{2(-1)1})\cdot \b^{-2}(b_1))\sigma(\a^{-3}(a_{2(-1)2}),\a^{-2}(b_{2(-1)}))]\#\a^{-2}(a_{2(0)(-1)}b_{2(0)(-1)})\\
&\o\b^{-2}(a_{2(0)(0)}b_{2(0)(0)})\#1\\
\stackrel{A_6}{=}&(ab)_1\#\a^{-1}((ab)_{2(-1)})\o\b^{-1}((ab)_{2(0)})\#1\\
=&\Delta(ab\#1),
\end{aligned}$$
and $(i)$ is proved.

$$\begin{aligned}
\Delta(a\o1)\Delta(1\o g)=&a_1\sigma(\a^{-2}(a_{2(-1)1}),\a^{-1}(g_{11}))\#\a^{-2}(a_{2(-1)2})\a^{-1}(g_{12})\o a_{2(0)}\#\a(g_2)\\
\stackrel{(3.1)}{=}&\b(a_1)\#\a^{-1}(a_{2(-1)})g_1\o a_{2(0)}\#\a(g_2)\\
=&\Delta((a\#1)(1\# g)),
\end{aligned}$$
and $(ii)$ is proved.

$$\begin{aligned}
&\Delta((1\o h)(b\o1))\\
=&(\a^{-1}(h_1)\cdot b)_1\#\a^{-2}((\a^{-1}(h_1)\cdot b)_{2(-1)})\a^{-1}(h_{21})\o\b^{-1}((\a^{-1}(h_1)\cdot b)_{2(0)})\#h_{22}\\
\stackrel{A_4}{=}&(\a^{-3}(h_{111})\cdot\b^{-1}(b_1))\sigma(\a^{-2}(h_{112}),\a^{-1}(b_{2(-1)}))\#\a^{-2}[(\a^{-1}(h_{12})\cdot \b^{-1}(b_{2(0)}))_{(-1)}\a(h_{21})]\\
&\o\b^{-1}((\a^{-1}(h_{12})\cdot \b^{-1}(b_{2(0)}))_{(0)})\#h_{22}\\
=&(\a^{-2}(h_{11})\cdot\b^{-1}(b_1))\sigma(\a^{-1}(h_{12}),\a^{-1}(b_{2(-1)}))\#\a^{-2}[(\a^{-2}(h_{211})\cdot \b^{-1}(b_{2(0)}))_{(-1)}h_{212}]\\
&\o\b^{-1}((\a^{-2}(h_{211})\cdot \b^{-1}(b_{2(0)}))_{(0)})\#h_{22}\\
\stackrel{A_5}{=}&(\a^{-2}(h_{11})\cdot\b^{-1}(b_1))\sigma(\a^{-1}(h_{12}),\a^{-2}(b_{2(-1)1}))\#\a^{-2}(h_{211}b_{2(-1)2})\\
&\o\b^{-1}(\a^{-1}(h_{212})\cdot b_{2(0)})\#h_{22}\\
=&(\a^{-3}(h_{111})\cdot\b^{-1}(b_1))\sigma(\a^{-2}(h_{112}),\a^{-2}(b_{2(-1)1}))\#\a^{-1}(h_{12})\a^{-2}(b_{2(-1)2})\\
&\o\b^{-1}(h_{21}\cdot b_{2(0)})\#h_{22}\\
=&\Delta(1\o h)\Delta(b\o1),
\end{aligned}$$
and $(iii)$ is proved.
$$\begin{aligned}
&\Delta(1\o h)\Delta(1\o g)\\
=&(1\#h_1)(1\#g_1)\o(1\#h_2)(1\#g_2)\\
=&\sigma(h_{11},g_{11})\#\a^{-1}(h_{12}g_{12})\o\sigma(h_{21},g_{21})\#\a^{-1}(h_{22}g_{22})\\
=&\sigma(h_{11},g_{11})\#\a^{-2}(\sigma(h_{12},g_{12})_{(-1)})\a^{-2}(h_{21}g_{21})\o\b^{-1}(\sigma(h_{12},g_{12})_{(0)})\#\a^{-1}(h_{22}g_{22})\\
\stackrel{A_3}{=}&\Delta((1\#h)(1\#g)),
\end{aligned}$$
and $(iv)$ is proved.

The proof is completed.

\end{proof}

\begin{definition}
Let $(H,\a)$ be a Hom-bialgebra, $(A,\b)$ a Hom-algebra and $\sigma:H\o H\rightarrow A$ and $S:H\rightarrow H$ a linear map. $S$ is called a $\sigma$-antipode of $H$ if
\begin{itemize}
  \item [(i)] $\a\circ S=S\circ\a$,
  \item [(ii)] $(\sigma\o m_H)\Delta_{H\o H}(id\o S)\Delta_H(h)=\varepsilon_H(h)1\o1$,
  \item [(iii)] $(\sigma\o m_H)\Delta_{H\o H}(S\o id)\Delta_H(h)=\varepsilon_H(h)1\o1$.
\end{itemize}
In this case $H$ is called a $\sigma$-Hom Hopf algebra.
\end{definition}

\begin{example}
Let $(H,\a)$ be a Hom-Hopf algebra. Consider the case when $\sigma$ is trivial, then we can regard $S_H$ as $\sigma$-antipode of $H$.
\end{example}

\begin{proposition}
$(A\#_\sigma H,\b\o\a)$ be a Hom-bialgebra. If $(H,\a)$ is a $\sigma$-Hom Hopf algebra with the $\sigma$-antipode $S_H$ and $S_A\in Hom(A,A)$ is a convolution invertible element of $id_A$ with $\b\circ S_A=S_A\circ\b$. Then $(A\#_\sigma H,\b\o\a)$ is a Hom-Hopf algebra with the antipode given by
$$S(a\#h)=(1\#S_H(\a^{-3}(a_{(-1)})\a^{-2}(h)))(S_A(\b^{-2}(a_{(0)}))\#1)$$
\end{proposition}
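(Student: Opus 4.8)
The plan is to verify directly that the proposed map $S$ is a two-sided convolution inverse of the identity on the Hom-bialgebra $(A\#_\sigma H,\b\o\a)$, using the comultiplication of the Hom-smash coproduct together with the $\sigma$-antipode axioms of Definition~3.3 and the convolution-invertibility of $S_A$. Concretely, I would first record the comultiplication on a general element: since $(A\#_\sigma H,\b\o\a)$ is a Hom-smash coproduct, we have
$$\Delta(a\#h)=a_1\#\a^{-2}(a_{2(-1)})\a^{-1}(h_1)\o\b^{-1}(a_{2(0)})\#h_2,$$
so that the convolution product $(id\ast S)(a\#h)$ and $(S\ast id)(a\#h)$ can be written out using the crossed-product multiplication from Proposition~2.2. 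The goal is to show both equal $\varepsilon_A(a)\varepsilon_H(h)(1\#1)$, which identifies $S$ as the antipode.

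The key structural idea is that $S$ factors through the two distinguished kinds of elements governed by Lemma~2.5(2): the ``algebra part'' $a\#1$ and the ``coalgebra part'' $1\#h$. I would exploit the identities $(a\#1)(1\#h)=\b(a)\#\a(h)$, $(1\#h)(a\#1)=\a^{-1}(h_1)\cdot a\#h_2$, $(1\#h)(1\#g)=\sigma(h_1,g_1)\#\a^{-1}(h_2g_2)$, and $(a\#1)(b\#1)=ab\#1$ to reduce the convolution computation to two independent cancellations. First I would use axiom (ii)/(iii) of Definition~3.3 — the statement that $(\sigma\o m_H)\Delta_{H\o H}(id\o S_H)\Delta_H$ and its mirror collapse to $\varepsilon_H(h)1\o1$ — to collapse the $H$-component against the $1\#S_H(\cdots)$ factor, leaving behind only the $S_A$-factor paired with the $A$-component. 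Then the convolution-invertibility of $S_A$ relative to $id_A$ finishes the $A$-side, yielding the counit.

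The main obstacle, and the step demanding the most care, will be correctly threading the comodule-coalgebra structure maps $\r:A\to H\o A$ through the computation: the antipode formula mixes $a_{(-1)}$ with $h$ inside $S_H(\a^{-3}(a_{(-1)})\a^{-2}(h))$, and when one applies $\Delta$ the terms $a_{2(-1)}$, $a_{2(0)}$ interact with the $H$-multiplication and with $\sigma$. Keeping the $\a$-twists bookkept correctly and invoking the twisted-comodule-cocycle condition~(3.1) and the compatibilities $A_3$--$A_9$ of Proposition~3.2 at the right moments (for instance $A_7$ to move $\sigma$ past the comodule structure, and $A_9$ to treat $\r$ multiplicatively) is where the computation is genuinely delicate. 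I expect the cleanest route is to first establish the identity on the generating elements $a\#1$ and $1\#h$ separately, checking $(id\ast S)(a\#1)=\varepsilon_A(a)1\#1$ using $S_A$ and $(id\ast S)(1\#h)=\varepsilon_H(h)1\#1$ using the $\sigma$-antipode axioms, and then assemble the general case by writing $a\#h=(a\#1)(1\#h)$ up to $\b,\a$-twists and using that the antipode of a Hom-bialgebra is an anti-Hom-algebra morphism, i.e. $S((a\#1)(1\#h))=S(1\#\a(h))S(\b^{-1}(a)\#1)$ once multiplicativity is known. The verification that this factorization is consistent with the explicit formula for $S$ is the final, and most error-prone, piece.
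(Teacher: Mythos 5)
Your list of ingredients is the right one (the smash-coproduct comultiplication, Lemma 2.5(2), the $\sigma$-antipode axioms, and the convolution-invertibility of $S_A$), but your assembly strategy has a genuine gap. You propose to verify the antipode identities only on the elements $a\#1$ and $1\#h$ and then extend to general $a\#h$ via the factorization $(a\#1)(1\#h)=\b(a)\#\a(h)$ together with anti-multiplicativity of $S$. Anti-multiplicativity, however, is a \emph{consequence} of $S$ being an antipode, not an a priori property of the candidate map, so invoking it here is circular. The standard way to rescue a generators-and-relations argument is to prove independently that the explicit formula for $S$ is an anti-(Hom-)algebra morphism and then show that the set of elements satisfying $S(x_1)x_2=\varepsilon(x)1=x_1S(x_2)$ is closed under products; but that independent verification is a computation at least as long as the direct one, and in the Hom-setting the closure argument itself needs extra care because the convolution algebra is only Hom-associative and the regroupings introduce $\a$- and $\b$-shifts. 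You do not address either point.

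A second, more concrete problem is that the ``two independent cancellations'' picture does not hold: since $\Delta(a\#1)=a_1\#\a^{-2}(a_{2(-1)})\o\b^{-1}(a_{2(0)})\#1$, the $A$-generator already carries a nontrivial $H$-leg through the coaction $\r$, so checking $(S\ast id)(a\#1)$ forces you to use $S_H$, $\sigma$ and the left comodule-coalgebra axiom $\a^2(a_{(-1)})\o a_{(0)1}\o a_{(0)2}=a_{1(-1)}a_{2(-1)}\o a_{1(0)}\o a_{2(0)}$ in addition to $S_A$; the $A$-part and $H$-part never decouple. The paper's proof simply performs the direct computation of $S((a\#h)_1)(a\#h)_2$ on a general element: expand $S$ on the first tensor factor, use Lemma 2.5(2) to multiply out, merge $a_{1(-1)}a_{2(-1)}$ into $\a^2(a_{(-1)})$ via the comodule-coalgebra axiom, cancel the $A$-leg with $S_A\ast id_A=\varepsilon_A$, and finish with $[1\#S_H(h_1)][1\#h_2]=\sigma(S_H(h_1)_1,h_{21})\#\a^{-1}(S_H(h_1)_2h_{22})=\varepsilon(h)1\#1$ by axiom (iii) of Definition 3.3. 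Note also that the twisted-comodule-cocycle condition (3.1) and conditions $A_7$, $A_9$ that you plan to invoke are part of the already-assumed bialgebra structure and play no role in the antipode verification.
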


\begin{proof}
Firstly for all $a\in A$ and $h\in H$,
$$S\circ(\b\o\a)(a\#h)=(\b\o\a)\circ S(a\#h).$$
And
$$\begin{aligned}
&S(a_1\#\a^{-2}(a_{2(-1)})\a^{-1}(h_1))(\b^{-1}(a_{2(0)})\#h_2)\\
=&(1\#S_H(\a^{-2}(a_{1(-1)})(\a^{-3}(a_{2(-1)})\a^{-2}(h_1))))[(S_A(\b^{-2}(a_{1(0)}))\#1)(\b^{-2}(a_{2(0)})\#\a^{-1}(h_2))]\\
=&[1\#S_H(\a^{-3}(a_{1(-1)}a_{2(-1)})\a^{-1}(h_1))][S_A(\b^{-2}(a_{1(0)}))\b^{-2}(a_{2(0)})\#h_2]\\
=&[1\#S_H(\a^{-1}(a_{(-1)})\a^{-1}(h_1))][S_A(\b^{-2}(a_{(0)1}))\b^{-2}(a_{(0)2})\#h_2]\\
=&\varepsilon(a)[1\#S_H(h_1)][1\#h_2]\\
=&\varepsilon(a)\sigma(S_H(h_1)_1,h_{21})\#\a^{-1}(S_H(h_1)_2h_{22})\\
=&\varepsilon(a)\varepsilon(h)1\#1.
\end{aligned}$$
Similarly we can verify that $(a_1\#\a^{-2}(a_{2(-1)})\a^{-1}(h_1))S(\b^{-1}(a_{2(0)})\#h_2)=\varepsilon(a)\varepsilon(h)1\#1.$
Hence $S$ is the convolution inverse of $id$.

The proof is completed.

\end{proof}

\begin{corollary}
With the above notations, if $\sigma$ is trivial, we have the Radford biproduct $(A\t H,\b\o\a)$. At this moment we call $(H,A)$ is an admissible pair.
\end{corollary}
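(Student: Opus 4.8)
The plan is to read off this corollary from Proposition 3.5 by specializing to the trivial cocycle $\sigma(h,g)=\varepsilon_H(h)\varepsilon_H(g)1$. First I would recall from Example 2.4(1) that for trivial $\sigma$ the Hom-crossed product multiplication degenerates to the Hom-smash product, so that as a Hom-algebra $A\#_\sigma H=A\#H$; carrying along the Hom-smash coproduct already imposed on $A\#_\sigma H$, this identifies the underlying object with the Hom-smash biproduct $(A\t H,\b\o\a)$.

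Next I would check that, under trivial $\sigma$, the nine conditions $A_1$--$A_9$ of Proposition 3.2 collapse to exactly the data of a Hom-bialgebra $A$ in the Yetter--Drinfeld category over $(H,\a)$. The key observation is that $A_9$ gives $\r(1)=1\o1$, whence $\sigma(h_1,g_1)_{(-1)}\o\sigma(h_1,g_1)_{(0)}=\varepsilon_H(h_1)\varepsilon_H(g_1)(1\o1)$; using this together with the counit and Hom-coassociativity axioms, both $A_3$ and $A_7$ become automatically true and impose no new constraint. In $A_4$ and $A_6$ the $\sigma$-factors are annihilated by counits, so $A_4$ reduces to the Hom-module-coalgebra compatibility of the action and $A_6$ reduces to the braided multiplicativity of $\Delta_A$. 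The remaining conditions $A_1,A_2,A_5,A_8,A_9$, together with these simplified forms, are precisely the axioms that $(A,\b)$ be simultaneously a Hom-module algebra, Hom-comodule algebra and Hom-module/comodule coalgebra satisfying the Yetter--Drinfeld compatibility $A_5$ -- that is, a Hom-bialgebra in ${}^{H}_{H}\mathcal{YD}$. I would take this closed package of requirements as the definition of the \emph{admissible pair} $(H,A)$.

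Then I would apply Proposition 3.5. By Example 3.4 the ordinary antipode $S_H$ of $H$ is a $\sigma$-antipode when $\sigma$ is trivial, so $(H,\a)$ is a $\sigma$-Hom Hopf algebra; and the antipode $S_A$ of $A$ (in the braided sense) is a convolution inverse of $id_A$ commuting with $\b$. Proposition 3.5 then furnishes a Hom-Hopf structure on $(A\t H,\b\o\a)$ whose antipode, after suppressing the $\sigma$-terms and replacing the crossed product by the smash product, is the classical Radford bosonization antipode $S(a\#h)=(1\#S_H(\a^{-3}(a_{(-1)})\a^{-2}(h)))(S_A(\b^{-2}(a_{(0)}))\#1)$. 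This exhibits $(A\t H,\b\o\a)$ as the Hom-analogue of Radford's biproduct, completing the proof.

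The main difficulty is purely computational bookkeeping: one must confirm that after deleting the $\sigma$-factors the leftover powers of $\a$ and $\b$ in $A_4$, $A_6$ and $A_7$ cancel correctly under the counit contractions, so that no disguised cocycle hypothesis survives. I expect $A_7$ to demand the most care, since there the comodule structure of $\sigma$ meets the product $h_2g_2$ and one must match the single twist $\a$ on both sides using Hom-coassociativity.
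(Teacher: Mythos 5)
Your overall route is the intended one: the paper gives no explicit proof of this corollary, treating it as immediate from Example 2.4(1) (trivial $\sigma$ collapses the Hom-crossed product to the Hom-smash product), Example 3.4 (the ordinary antipode $S_H$ serves as the $\sigma$-antipode when $\sigma$ is trivial), and Proposition 3.5 (which then produces the Hom-Hopf structure on $A\t H$ with the bosonization-type antipode). Your first and third paragraphs reconstruct exactly this, and the claim that the conditions $A_1$--$A_9$ of Proposition 3.2 simplify when the $\sigma$-factors are killed by counits (in particular $A_3$ and $A_7$ becoming vacuous once $\r(1)=1\o 1$) is sound.

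However, your middle paragraph contains an over-claim that the paper itself explicitly rules out. You assert that the collapsed conditions are ``precisely the axioms that $(A,\b)$ be a Hom-bialgebra in $^H_H\mathcal{YD}$'' and you propose to \emph{define} the admissible pair that way. Remark 3.7, immediately following this corollary, states that in the Hom setting the Radford biproduct is \emph{not} associated to a Hopf algebra in the Yetter--Drinfeld category unless $\a^2=id_H$: the compatibility $A_5$ and the braided multiplicativity in $A_6$ carry residual powers of $\a$ and $\b$ that do not assemble into the Hom-Yetter--Drinfeld axioms in general. This is why the paper defines the admissible pair directly by the list of conditions in Propositions 3.2 and 3.5 specialized to trivial $\sigma$, rather than by membership in $^H_H\mathcal{YD}$. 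Your proof of the corollary survives this correction --- nothing in the derivation of the Hom-Hopf structure on $A\t H$ uses the Yetter--Drinfeld identification --- but the characterization you give of ``admissible pair'' is not the one the paper intends and is false as stated for general $\a$.
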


\begin{remark}
If $H$ is a Hopf algebra, and $(H,A)$ is an admissible pair, it is well known that $A$ is a Hopf algebra in the Yetter-Drinfeld category $_HYD(H)^H$. However to our disappointment in the case of Hom-Hopf algebra, this conclusion does not hold unless $\a^2=id_H$.
\end{remark}

\begin{example}
In the Example \ref{A} (3), consider the crossed product $k\#_\sigma H$ and taking $t=0$, then $k\#_\sigma H$ is a Hom-Hopf algebra and $k\#_\sigma H=H$ as a Hom-Hopf algebra.
\end{example}

\section{Lazy 2-cocycle}
\def\theequation{4.\arabic{equation}}
\setcounter{equation} {0} \hskip\parindent

In this section we will generalize the theory of lazy 2-cocycle to Hom-Hopf algebras.
Recall from \cite{LW} that a left 2-cocycle on a Hom-bialgebra $(H,\a)$ is a linear map $\sigma:H\o H\rightarrow k$ satisfying
\begin{eqnarray*}
&&\sigma\circ(\a\o\a)=\sigma,\\
&&\sigma(l_{1},k_{1})\sigma(\a^{2}(h),l_{2}k_{2})=\sigma(h_{1},l_{1})\sigma(h_{2}l_{2},\a^{2}(k)),
\end{eqnarray*}
for all $h,k,l\in H$.

$\sigma$ is a right Hom-2-cocycle if
\begin{eqnarray*}
&&\sigma\circ(\a\o\a)=\sigma,\\
&&\sigma(\a^{2}(h),l_{1}k_{1})\sigma(l_{2},k_{2})=\sigma(h_{1}l_{1},\a^{2}(k))\sigma(h_{2},l_{2}).
\end{eqnarray*}

$\sigma$ is called {\sl normal} if $\sigma(1,h)=\sigma(h,1)=\varepsilon(h).$

If $\sigma$ is normalized and convolution invertible, then $\sigma$ is a left Hom-2-cocycle if and only if $\sigma^{-1}$ is a right Hom-2-cocycle.

Given a linear map $\sigma:H\o H\rightarrow k$, define a new multiplication on $H$ by
$$h\cdot_{\sigma}g=\sigma(h_1,g_1)\a^{-1}(h_2g_2).$$
Then $\cdot_{\sigma}$ is Hom-associative if and only if $\sigma$ is a left Hom-2-cocycle.

If we define the multiplication by
$$h\ _{\sigma}\cdot g=\a^{-1}(h_{1}g_{1})\sigma(h_{2},g_{2}).$$
Then $_{\sigma}\cdot$ is Hom-associative if and only if $\sigma$ is a right Hom-2-cocycle.

A left Hom-2-cocycle $\sigma$ is called lazy if for all $h,g\in H$
$$\sigma(h_1,g_1)h_2g_2=h_1g_1\sigma(h_2,g_2).$$
A lazy left Hom-2-cocycle is also a right Hom-2-cocycle.

\begin{example}
Note that $\sigma$ defined in Example \ref{A} (3) is a lazy 2-cocycle on $H_4$.
\end{example}

\begin{lemma}
Let $\g:H\rightarrow k$ be a normalized and convolution invertible linear map such that $\g\circ\a=\g$, define $D^1(\g):H\o H\rightarrow k$ by
$$D^1(\g)(h,g)=\g(h_1)\g(g_1)\g^{-1}(h_2g_2),$$
for all $h,g\in H$. Then $D^1(\g)$ is a normalized and convolution invertible left 2-cocycle.
\end{lemma}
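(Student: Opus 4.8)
Let $\g:H\ra k$ be a normalized, convolution invertible linear map with $\g\circ\a=\g$. Define $D^1(\g)(h,g)=\g(h_1)\g(g_1)\g^{-1}(h_2g_2)$. Then $D^1(\g)$ is a normalized, convolution invertible left $2$-cocycle.

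Let me plan the proof.

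The lemma has three claims: $D^1(\g)$ is (i) normalized, (ii) convolution invertible, (iii) a left Hom-2-cocycle. I also need to verify the compatibility $D^1(\g)\circ(\a\o\a)=D^1(\g)$ required of a Hom-2-cocycle.

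**Normalization.** Plug in $g=1$. Then $\g(h_1)\g(1)\g^{-1}(h_2\cdot 1)$. Using $\g(1)=1$ (normalized) and counitality of the Hom-coalgebra... I need $h_2\cdot 1 = h_2$? No—in the Hom setting $(id\o\v)\Delta = \a$, so $h_1\v(h_2)=\a(h)$. Let me be careful. With $g=1$: $\g(h_1)\g(1_1)\g^{-1}(h_2 1_2) = \g(h_1)\v(1)\g^{-1}(h_2)$... actually $\Delta(1)=1\o 1$ so $1_1=1_2=1$, giving $\g(h_1)\g^{-1}(h_2)=(\g*\g^{-1})(h)=\v(h)$. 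Good, so $D^1(\g)(h,1)=\v(h)$, and symmetrically $D^1(\g)(1,g)=\v(g)$.

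**$\a$-invariance.** Compute $D^1(\g)(\a(h),\a(g))$. Since $\Delta\circ\a=(\a\o\a)\Delta$, we get $\g(\a(h_1))\g(\a(g_1))\g^{-1}(\a(h_2)\a(g_2))$. Using $\g\circ\a=\g$ on the first two factors and $\a(h_2)\a(g_2)=\a(h_2g_2)$ (multiplicativity of $\a$) with $\g^{-1}\circ\a=\g^{-1}$ (which follows since $\g\circ\a=\g$ implies the convolution inverse is also $\a$-invariant), this equals $D^1(\g)(h,g)$.

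**Convolution invertibility.** The natural candidate is $D^1(\g)^{-1}(h,g)=\g(h_1g_1)\g^{-1}(g_2)\g^{-1}(h_2)$. I would verify $D^1(\g)*D^1(\g)^{-1}=\v\o\v$ by direct convolution computation on $H\o H$, repeatedly collapsing $\g*\g^{-1}=\v$ and using Hom-coassociativity to rebracket. This is the step I expect to require the most bookkeeping.

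**The cocycle identity.** This is the heart. I must show $D^1(\g)(l_1,k_1)D^1(\g)(\a^2(h),l_2k_2)=D^1(\g)(h_1,l_1)D^1(\g)(h_2l_2,\a^2(k))$. The conceptual point, which I would exploit rather than brute-force, is that $D^1(\g)$ is a \emph{coboundary}: the associated twisted multiplication $\cdot_{D^1(\g)}$ is conjugate to the original by $\g$. Concretely, $h\cdot_{D^1(\g)}g = \g(h_1)\g(g_1)\g^{-1}(h_2g_2)\a^{-1}(h_3g_3)$ should equal $\g^{-1}\big(\g(h)\cdot\g(g)\big)$-type expression; since the original product is Hom-associative and $\g*\g^{-1}=\v$, the twisted product inherits Hom-associativity, and by the stated equivalence (``$\cdot_\sigma$ is Hom-associative iff $\sigma$ is a left Hom-2-cocycle'') this is exactly the cocycle condition. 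I would make this rigorous by a direct verification: expand both sides, use Hom-coassociativity to align the comultiplication factors, and collapse the internal $\g^{-1}*\g$ pairs to $\v$ so that both sides reduce to the common expression $\g(h_1)\g(l_1)\g(k_1)\g^{-1}\big(h_2(l_2k_2)\big)$ after appropriate $\a$-adjustments.

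The main obstacle will be managing the $\a$-powers and bracketings in the cocycle identity: unlike the classical case, Hom-coassociativity only gives $(\Delta\o\a)\Delta=(\a\o\Delta)\Delta$, so regrouping triple coproducts introduces $\a$-factors that must be tracked and then absorbed using $\g\circ\a=\g$ and $\g^{-1}\circ\a=\g^{-1}$. I would organize the computation so that every rebracketing step is paired immediately with the corresponding $\a$-invariance of $\g$, preventing the $\a$-powers from accumulating.
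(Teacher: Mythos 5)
Your plan is correct, and it reaches the conclusion by a genuinely different (and more self-contained) route than the paper. The paper's entire proof is the one-line remark that the lemma is ``an easy consequence of Proposition 2.8'': there, for a cleft extension with convolution-invertible $\g$, the map $\sigma(h,g)=(\g(\a^{-3}(h_1))\g(\a^{-3}(g_1)))\g^{-1}(\a^{-3}(h_2g_2))$ is shown to satisfy the crossed-product axioms of Proposition 2.2, and condition (3) there, specialized to $A=k$ with the trivial action, is precisely the left Hom-2-cocycle identity; so the cocycle property is inherited from the Hom-associativity of the crossed product already verified in Section 2. Your argument instead verifies everything directly on $H$: normalization and $\a$-invariance by hand, invertibility via the candidate $D^1(\g)^{-1}(h,g)=\g(h_1g_1)\g^{-1}(h_2)\g^{-1}(g_2)$ (which is the right one --- note that convolution of $(\a\o\a)$-invariant functionals on $H\o H$ is associative, so the inverse of $(\g\o\g)*(\g^{-1}\circ\mu)$ is $(\g\circ\mu)*(\g^{-1}\o\g^{-1})$), and the cocycle identity either by collapsing the interior $\g^{-1}*\g$ pairs so both sides become $\g(h_1)\g(l_1)\g(k_1)\g^{-1}(\cdots)$ with the two bracketings matched by Hom-associativity, or by observing that $\cdot_{D^1(\g)}$ is conjugate to $\mu$ by the bijection $h\mapsto\g(h_1)\a^{-1}(h_2)$ (which commutes with $\a$ because $\g\circ\a=\g$). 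The conjugation idea is in fact the same mechanism hiding inside the paper's Proposition 2.8, where the isomorphism $\Phi$ plays the role of your conjugating map; what your version buys is independence from the cleft-extension machinery (and, strictly speaking, it avoids the slight mismatch that a general $\g:H\to k$ is not literally a comodule map into $B=k$, so the paper's citation is being used for its computation rather than its hypotheses), while the paper's version buys brevity. Two small points to tighten when you write it out: in the normalization step $h_2 1=\a(h_2)$, so you are implicitly using $\g^{-1}\circ\a=\g^{-1}$ (which does follow from $\g\circ\a=\g$ by uniqueness of convolution inverses, as you note); and the two convolution computations you defer are exactly where the $\a$-powers must be reconciled --- e.g.\ the final comparison is between $\g^{-1}\bigl(\a^2(h_2)(l_2k_2)\bigr)$-type and $\g^{-1}\bigl((h_2l_2)\a^2(k_2)\bigr)$-type terms, which agree only after applying $\g^{-1}\circ\a=\g^{-1}$ together with $\a(a)(bc)=(ab)\a(c)$ at the correct depth of the Sweedler legs --- so those steps are real work, but they do close as you predict.
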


\begin{proof}
This is an easy consequence of Proposition 2.8.

\end{proof}

$\g$ is lazy if for all $h\in H$, $\g(h_1)h_2=h_1\g(h_2).$

The set of all normalized and convolution invertible linear maps $\g:H\rightarrow k$ satisfying $\g\circ\a=\g$ is denoted by $Reg^1_L(H)$, which is a group under convolution.

\begin{lemma}
The set of convolution invertible lazy Hom-2-cocycle denoted by $Z^2_L(H)$ is a group.
\end{lemma}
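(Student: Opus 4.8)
The plan is to verify the three group axioms for $Z^2_L(H)$ under the convolution product, using the fact (established just before the statement) that a lazy left Hom-2-cocycle is simultaneously a right Hom-2-cocycle. The main work is to show that $Z^2_L(H)$ is closed under convolution and under taking convolution inverses; associativity of the product is inherited from the ambient convolution algebra $\mathrm{Hom}(H\o H,k)$, and the identity element is $\varepsilon_H\o\varepsilon_H$, which is evidently normalized, convolution invertible, a 2-cocycle, and lazy.

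First I would prove closure. Let $\sigma,\tau\in Z^2_L(H)$ and set $\theta=\sigma*\tau$, so $\theta(h,g)=\sigma(h_1,g_1)\tau(h_2,g_2)$. Normalization of $\theta$ is immediate from normalization of $\sigma$ and $\tau$. Convolution invertibility of $\theta$ holds because the convolution-invertible elements form a group, with $\theta^{-1}=\tau^{-1}*\sigma^{-1}$. The key points are that $\theta$ is again a left Hom-2-cocycle and again lazy. For laziness I would compute directly:
\begin{eqnarray*}
\theta(h_1,g_1)h_2g_2 &=& \sigma(h_{11},g_{11})\tau(h_{12},g_{12})h_2g_2,
\end{eqnarray*}
and then push the factors $h_2g_2$ leftward by applying the laziness of $\tau$ first and then of $\sigma$, using Hom-coassociativity to rebracket the iterated comultiplications; this should transform the expression into $h_1g_1\sigma(h_2,g_2)\tau(\cdots)=h_1g_1\theta(h_2,g_2)$. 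For the 2-cocycle condition on $\theta$ I would expand both sides of the left-cocycle identity for $\theta$, and then use laziness of the component cocycles to commute the scalar factors so that the cocycle identities for $\sigma$ and for $\tau$ can be applied independently; laziness is exactly what lets the two cocycle conditions decouple. This is the step I expect to be the main obstacle, since it requires carefully tracking several layers of Sweedler indices and invoking laziness at the right moments to separate the $\sigma$-part from the $\tau$-part.

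Next I would prove that inverses stay in the set. Given $\sigma\in Z^2_L(H)$, its convolution inverse $\sigma^{-1}$ is automatically normalized and convolution invertible. Because $\sigma$ is a normalized convolution-invertible lazy left Hom-2-cocycle, it is also a right Hom-2-cocycle, and by the equivalence recalled in the text, $\sigma^{-1}$ is a right Hom-2-cocycle; by the same token its laziness will make it a left Hom-2-cocycle as well. To see that $\sigma^{-1}$ is lazy, I would start from the laziness relation $\sigma(h_1,g_1)h_2g_2=h_1g_1\sigma(h_2,g_2)$, convolve on both sides suitably with $\sigma^{-1}$, and use the defining property $\sigma*\sigma^{-1}=\sigma^{-1}*\sigma=\varepsilon\o\varepsilon$ together with Hom-coassociativity to solve for $\sigma^{-1}(h_1,g_1)h_2g_2$; the lazy identity for $\sigma$ is precisely what allows the cancellation to go through symmetrically on both sides, yielding $\sigma^{-1}(h_1,g_1)h_2g_2=h_1g_1\sigma^{-1}(h_2,g_2)$.

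Finally I would assemble these facts: $\varepsilon\o\varepsilon$ is the unit, the product is associative as a restriction of convolution, closure under product and under inversion have been verified, so $Z^2_L(H)$ is a group. The genuinely delicate bookkeeping lives entirely in the closure-under-product computation, where the interplay of the two laziness conditions with Hom-coassociativity must be orchestrated so that the left-2-cocycle identities for $\sigma$ and $\tau$ apply cleanly; everything else follows from the structure of the convolution algebra and the left/right cocycle equivalence stated above.
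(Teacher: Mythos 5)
Your proposal is correct and follows essentially the same route as the paper: the heart of both arguments is the closure-under-convolution computation in which laziness of one factor is used to commute scalar factors past the comultiplication legs so that the left Hom-2-cocycle identities for $\sigma$ and $\tau$ apply independently. You are in fact somewhat more complete than the paper, which carries out only that closure computation and leaves the laziness of the product, the unit $\varepsilon\o\varepsilon$, and the closure under convolution inverses (which you handle via the left/right cocycle duality for $\sigma^{-1}$) as implicit.
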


\begin{proof}
Suppose that $\sigma_1,\sigma_2\in Z^2_L(H)$, and for all $h,g,l\in H$,
$$\begin{aligned}
&(\sigma_1\ast\sigma_2)(g_1,l_1)(\sigma_1\ast\sigma_2)(\a^2(h),g_2l_2)\\
=&\sigma_1(g_{11},l_{11})\sigma_2(g_{12},l_{12})\sigma_1(\a^2(h_1),g_{21}l_{21})\sigma_2(\a^2(h_2),g_{22}l_{22})\\
=&\sigma_1(g_{11},l_{11})\sigma_1(\a^2(h_1),g_{12}l_{12})\sigma_2(g_{21},l_{21})\sigma_2(\a^2(h_{2}),g_{22}l_{22})\\
=&\sigma_1(h_{11},g_{11})\sigma_1(h_{12}g_{12},\a^2(l_1))\sigma_2(h_{21},g_{21})\sigma_2(h_{22}g_{22},\a^2(l_2))\\
=&(\sigma_1\ast\sigma_2)(h_1,g_1)(\sigma_1\ast\sigma_2)(h_2g_2,\a^2(l)),
\end{aligned}$$
thus $\sigma_1\ast\sigma_2$ is a left 2-cocycle on $H$, and it is easy to see that $\sigma_1\ast\sigma_2$ is lazy. The proof is completed.

\end{proof}

\begin{proposition}
The map $D^{1}:Reg^{1}_{L}(H)\rightarrow Z^{2}_{L}(H)$ is a group homomorphism,
whose image denoted by $B^{2}_{L}(H)$, is contained in the center of $Z^{2}_{L}(H)$. Thus we call quotient group $H^{2}_{L}(H,\alpha):=Z^{2}_{L}(H,\alpha)/B^{2}_{L}(H,\alpha)$ the second lazy cohomology group of $H$.
\end{proposition}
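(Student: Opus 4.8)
The plan is to establish the final Proposition~4.9 in three stages: first that $D^1$ is a well-defined group homomorphism into $Z^2_L(H)$, second that its image $B^2_L(H)$ consists of lazy 2-cocycles, and third that this image is central in $Z^2_L(H)$. For the homomorphism property, I would take $\g,\e\in Reg^1_L(H)$ and compute $D^1(\g\ast\e)(h,g)$ directly from the definition $D^1(\g)(h,g)=\g(h_1)\g(g_1)\g^{-1}(h_2g_2)$. Since the target $k$ is commutative, the values $\g(h_1),\e(g_1),\dots$ are scalars that commute freely, so the main task is to reorganize the Sweedler components using Hom-coassociativity and the compatibility $(\a\o\a)\circ\Delta=\Delta\circ\a$ so that $D^1(\g\ast\e)=D^1(\g)\ast D^1(\e)$. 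The identity element maps to the trivial (normalized) cocycle, so this confirms the group map structure; Lemma~4.5 already guarantees each $D^1(\g)$ lands in the convolution-invertible left 2-cocycles.

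Next I would verify that each $D^1(\g)$ is actually \emph{lazy}, i.e. satisfies $\sigma(h_1,g_1)h_2g_2=h_1g_1\sigma(h_2,g_2)$, which is needed to land in $Z^2_L(H)$ rather than merely among all 2-cocycles. The key input here is that $\g$ is lazy in the sense $\g(h_1)h_2=h_1\g(h_2)$, together with the analogous property one deduces for $\g^{-1}$ (convolution inverses of lazy maps are lazy). Substituting $D^1(\g)(h,g)=\g(h_1)\g(g_1)\g^{-1}(h_2g_2)$ into both sides of the laziness condition and repeatedly pushing the scalar factors through the products $h_2g_2$ using the laziness of $\g$ and $\g^{-1}$ should collapse both sides to the same expression. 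This is bookkeeping-heavy but conceptually routine once laziness of $\g^{-1}$ is in hand.

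The genuinely important step is showing $B^2_L(H)$ is central in $Z^2_L(H)$. Here I would take an arbitrary $\sigma\in Z^2_L(H)$ and $D^1(\g)\in B^2_L(H)$ and show $\sigma\ast D^1(\g)=D^1(\g)\ast\sigma$ under convolution. Writing out $(\sigma\ast D^1(\g))(h,g)=\sigma(h_1,g_1)\,\g(h_{21})\g(g_{21})\g^{-1}(h_{22}g_{22})$ and the reversed product, I expect the laziness of \emph{both} $\sigma$ and $D^1(\g)$ to be exactly what allows the scalar factors $\g(h_{21})\g(g_{21})\g^{-1}(\cdots)$ to be transported across $\sigma$. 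The mechanism is that laziness of $\sigma$ lets one rewrite $\sigma(h_1,g_1)(\text{stuff built from }h_2,g_2)$ by interchanging which tensor leg carries the cocycle, and this is precisely the interchange needed to match the two convolution orders. The hard part will be managing the iterated coproducts $h_{21},h_{22}$ versus $h_{12}$ correctly and identifying, at the right moment, where the laziness identity $\sigma(h_1,g_1)h_2g_2=h_1g_1\sigma(h_2,g_2)$ must be applied so that the two sides reconcile; the Hom-twists by powers of $\a$ introduce no new difficulty beyond careful tracking, since $\a$ is compatible with $\Delta$ and fixes all the relevant scalars. Once centrality is established, the quotient $H^2_L(H,\a)=Z^2_L(H,\a)/B^2_L(H,\a)$ is a well-defined group since a central (in particular normal) subgroup is being quotiented out.
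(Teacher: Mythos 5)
Your plan follows the same route as the paper: all three assertions are handled by direct convolution computations in which laziness is used to carry scalar factors from one Sweedler leg to another, and your centrality argument is exactly the paper's. The one step that would fail as you describe it is the first: the identity $D^1(\g_1\ast\g_2)=D^1(\g_1)\ast D^1(\g_2)$ does \emph{not} follow from Hom-coassociativity, the compatibility of $\D$ with $\a$, and commutativity of $k$ alone. Writing both sides out (suppressing the $\a$-twists, which $\g_i\circ\a=\g_i$ absorbs), one must pass from $\cdots\g_2(h_{(2)})\g_2(g_{(2)})\,\g_1^{-1}(h_{(3)}g_{(3)})\cdots$ to $\cdots\g_1^{-1}(h_{(2)}g_{(2)})\,\g_2(h_{(3)})\g_2(g_{(3)})\cdots$, i.e.\ to reverse the order in which the functionals $\g_2\o\g_2$ and $\g_1^{-1}\circ m$ read off the legs of $h$ and $g$; coassociativity can only relabel, never reverse, so this interchange is precisely an application of the laziness of $D^1(\g_2)$ (equivalently, of $\g_2$ and $\g_2^{-1}$), which is exactly how the paper's middle equalities proceed. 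Since you already establish laziness of $D^1(\g)$ in your second stage and freely use laziness of $\g$ and $\g^{-1}$ elsewhere, the repair costs nothing, but it must be made explicit or the first stage does not close. Two minor remarks: your verification that each $D^1(\g)$ is lazy is left implicit in the paper and is worth writing out; and the paper's displayed definition of $Reg^1_L(H)$ omits the laziness condition $\g(h_1)h_2=h_1\g(h_2)$ on its elements, which you are (correctly) assuming --- without it neither the homomorphism property nor the centrality holds.
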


\begin{proof}
For all $\g_1,\g_2\in Reg^{1}_{L}(H)$ and $h,g\in H$,
$$\begin{aligned}
D^1(\g_1\ast\g_2)(h,g)=&\g_1(h_{11})\g_2(h_{12})\g_1(g_{11})\g_2(g_{12})\g_2^{-1}(h_{21}g_{21})\g_1^{-1}(h_{22}g_{22})\\
=&\g_1(h_{1})\g_1(g_{1})\g_1^{-1}(h_{22}g_{22})\g_2(h_{211})\g_2(g_{211})\g_2^{-1}(h_{212}g_{212})\\
=&\g_1(h_{1})\g_1(g_{1})\g_1^{-1}(h_{22}g_{22})D^{1}(\g_2)(h_{21},g_{21})\\
=&\g(h_{1})\g_1(g_{1})\g_1^{-1}(h_{21}g_{21})D^{1}(\g_2)(h_{22},g_{22})\\
=&\g_1(h_{11})\g_1(g_{11})\g_1^{-1}(h_{12}g_{12})D^{1}(\g_2)(h_{2},g_{2})\\
=&(D^{1}(\g_1)\ast D^{1}(\g_2))(h,g),
\end{aligned}$$
and $D^{1}(\varepsilon_H)=\varepsilon_H\o\varepsilon_H.$ Thus $D^1$ is a group homomorphism.

For all $\g\in Reg^{1}_{L}(H)$ and $\sigma\in Z^{2}_{L}(H)$,
$$\begin{aligned}
(\sigma\ast D^{1}(\g))(h,g)
=&\sigma(h_{1},g_{1})\g(h_{21})\g(g_{21})\g^{-1}(h_{22}g_{22})\\
=&\sigma(h_{12},g_{12})\g(h_{2})\g(g_{2})\g^{-1}(h_{11}g_{11})\\
=&\sigma(h_{21},g_{21})\g(h_{22})\g(g_{22})\g^{-1}(h_{1}g_{1})\\
=&\sigma(h_{22},g_{22})\g(h_{21})\g(g_{21})\g^{-1}(h_{1}g_{1})\\
=&\g(h_{11})\g(g_{11})\g^{-1}(h_{12}g_{12})\sigma(h_{2},g_{2})\\
=&(D^{1}(\g)\ast\sigma)(h,g).
\end{aligned}$$
The proof is completed.
\end{proof}

\begin{lemma}
Let $\sigma:H\o H\rightarrow k$ be a normalized and convolution invertible left (respectively right) Hom-2-cocycle, $H_{\sigma}$ (respectively $_{\sigma}H$) is right (respectively left) $H$-comodule algebra via $\Delta$. If $\sigma$ is lazy, $H_{\sigma}=\ _{\sigma}H$ as algebras and we denote it by $H(\sigma)$. Moreover $H(\sigma)$ is an $H$-bicomodule algebra.
\end{lemma}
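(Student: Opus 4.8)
The plan is to verify the claimed statement in three stages, corresponding to the three assertions: first that $H_\sigma$ (and symmetrically $_\sigma H$) is a right (resp.\ left) $H$-comodule algebra via $\Delta$; second that laziness forces $H_\sigma = {}_\sigma H$ as algebras; and third that the resulting single algebra $H(\sigma)$ is an $H$-bicomodule algebra. Throughout I would work directly from the definitions of the twisted products $h\cdot_\sigma g = \sigma(h_1,g_1)\a^{-1}(h_2 g_2)$ and $h\,{}_\sigma\!\cdot g = \a^{-1}(h_1 g_1)\sigma(h_2,g_2)$ recalled just before the statement, together with the fact (also stated in the excerpt) that a normalized convolution-invertible $\sigma$ is a left Hom-$2$-cocycle iff $\sigma^{-1}$ is a right one, and that a lazy left Hom-$2$-cocycle is automatically a right one.

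For the first stage, since $\sigma$ is a left Hom-$2$-cocycle the product $\cdot_\sigma$ is Hom-associative by the result quoted above, so $H_\sigma$ is a Hom-algebra. To see it is a right $H$-comodule algebra via $\Delta$, I would check that $\Delta$ is multiplicative for $\cdot_\sigma$, i.e.\ $\Delta(h\cdot_\sigma g) = \Delta(h)\cdot_\sigma \Delta(g)$ where the right-hand side uses the twisted product on the first tensor-factor and the ordinary product on the second; this reduces to the coassociativity and the compatibility $(\a\o\a)\circ\Delta = \Delta\circ\a$ together with the definition of $\cdot_\sigma$, and is essentially a bookkeeping computation with Sweedler indices. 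The left-module statement for $_\sigma H$ is the mirror image.

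The second stage is where the laziness hypothesis does the real work. I would show $h\cdot_\sigma g = h\,{}_\sigma\!\cdot g$ for all $h,g$. Unwinding, this is the identity $\sigma(h_1,g_1)\a^{-1}(h_2 g_2) = \a^{-1}(h_1 g_1)\sigma(h_2,g_2)$, which is precisely the lazy condition $\sigma(h_1,g_1)h_2 g_2 = h_1 g_1 \sigma(h_2,g_2)$ after applying $\a^{-1}$ (using that $\sigma$ takes scalar values so it commutes past the tensor factors, and $\a^{-1}$ is linear). So the equality of the two algebra structures is immediate once laziness is written out, and $H(\sigma)$ is well defined.

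The final and most delicate stage is the bicomodule algebra claim: I must verify that the left coaction (from $_\sigma H$) and the right coaction (from $H_\sigma$), both given by $\Delta$, are compatible, meaning $(\Delta \o \a)\circ\Delta = (\a\o\Delta)\circ\Delta$ holds at the level of the common algebra $H(\sigma)$, and that each coaction is an algebra map for the single product. The coassociativity-type compatibility is just the Hom-coassociativity already built into $(H,\a)$, but the genuine obstacle is checking that \emph{both} coactions are algebra morphisms with respect to the \emph{same} twisted multiplication; for the right coaction this was done in stage one using that $\sigma$ is a \emph{left} cocycle, while for the left coaction the analogous computation naturally wants $\sigma$ to be a \emph{right} cocycle. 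This is exactly where I expect the main difficulty to concentrate, and it is resolved by invoking that a lazy left Hom-$2$-cocycle is simultaneously a right Hom-$2$-cocycle, so both multiplicativity computations go through for the common structure $H(\sigma)$. I would therefore present stage two before the bicomodule verification, so that laziness has already been converted into the two-sided cocycle property needed to make both coactions algebra maps.
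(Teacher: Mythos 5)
Your outline is correct and is exactly the verification the paper has in mind: the paper's own proof of this lemma is simply ``Straightforward,'' and your three stages (multiplicativity of $\Delta$ for the twisted product, the observation that applying $\a^{-1}$ to the lazy condition gives $h\cdot_\sigma g = h\,{}_\sigma\!\cdot g$, and Hom-coassociativity as the bicomodule compatibility, with laziness supplying the two-sided cocycle property) fill in that routine check in the natural way. The only refinement worth noting is that the multiplicativity of $\Delta$ needs only $\sigma\circ(\a\o\a)=\sigma$ together with Hom-coassociativity; the left/right cocycle conditions enter solely to make the twisted products Hom-associative, which is a prerequisite rather than part of the comodule-algebra computation itself.
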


\begin{proof}
Straightforward.

\end{proof}

In the following lemma, we will list the formulae useful in our computations.

\begin{lemma}
\begin{itemize}
\item [(1)] Let $\sigma$ be a normalized and convolution invertible left Hom-2-cocycle. For all $h\in H$
\begin{eqnarray}
&&\sigma(h_{11},S(h_{12}))\sigma^{-1}(S(h_{21}),h_{22})=\varepsilon(h),\\
&&\sigma(S^{-1}(h_{12}),h_{11})\sigma^{-1}(h_{22},S^{-1}(h_{21}))=\varepsilon(h),\\
&&\sigma(h_{11},g_{11})\sigma(h_{12}g_{12},S(\a(h_2g_2)))\nonumber\\ &&~~~=\sigma(g_{11},S(g_{12}))\sigma(h_{11},S(h_{12}))\sigma^{-1}(S(g_2),S(h_2)).
\end{eqnarray}
\item [(2)]
If $\sigma$ is lazy, we have the following relations:
\begin{eqnarray}
&&\sigma(h_{1},S(h_{2}))=\sigma(S(h_1),h_2),\\
&&\sigma(S^{-1}(h_{2}),h_{1})=\sigma^{-1}(h_{2},S^{-1}(h_{1})),\\
&&\sigma^{-1}(h_{21},S^{-1}(h_{12}))h_{22}S^{-1}(h_{11})=\sigma^{-1}(h_2,S^{-1}(h_1))1,\\
&&\sigma^{-1}(S^{-1}(h_{21}),h_{12})S^{-1}(h_{22})h_{11}=\sigma^{-1}(S^{-1}(h_2),h_1)1,\\
&&\sigma^{-1}(S(h_{12}),h_{21})S(h_{11})h_{22}=\sigma^{-1}(S(h_1),h_2)1,\\
&&\sigma^{-1}(S(h_{21}),h_{22})S(h_{1})=\sigma^{-1}(S(h_{11}),h_{12})S(h_2),\\
&&\sigma^{-1}(h_{12},S(h_{21}))h_{11}S(h_{22})=\sigma^{-1}(h_1,S(h_2))1,\\
&&\sigma^{-1}(S(h_{12}),h_{21})h_{22}S^{-1}(h_{11})=\sigma^{-1}(S(h_1),h_2)1.
\end{eqnarray}
\end{itemize}
\end{lemma}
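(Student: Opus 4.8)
The plan is to read every one of the eleven identities inside the convolution algebras $\mathrm{Hom}(H,k)$ and $\mathrm{Hom}(H\o H,k)$, interpreting each displayed string of $\sigma$'s, $\sigma^{-1}$'s and antipodes as a convolution product of $\a$-invariant maps. The only ingredients are the defining left Hom-$2$-cocycle identity, the $\a$-invariance $\sigma\circ(\a\o\a)=\sigma$ (used to rebalance the powers of $\a$ at will), normalization, and the antipode axioms $S(h_1)h_2=h_1S(h_2)=\varepsilon(h)1$ together with $S\a=\a S$, $S(gh)=S(h)S(g)$ and $\Delta S=(S\o S)\tau\Delta$ (and the corresponding $S^{-1}$ versions $h_2S^{-1}(h_1)=\varepsilon(h)1$, $\Delta S^{-1}=(S^{-1}\o S^{-1})\tau\Delta$). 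For part (2) I add the laziness identity; since $Z^2_L(H)$ is a group (shown above), $\sigma^{-1}$ is lazy as well, so I may also use $\sigma^{-1}(x_1,y_1)x_2y_2=x_1y_1\sigma^{-1}(x_2,y_2)$.

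For part (1), I would prove (4.1) by reading its left-hand side as $(p\ast q)(h)$ with $p(x)=\sigma(x_1,S(x_2))$ and $q(x)=\sigma^{-1}(S(x_1),x_2)$ and showing $p\ast q=\varepsilon$. This is not a bare instance of $\sigma\ast\sigma^{-1}=\varepsilon$, because the antipodes sit in mismatched legs; so the key step is to apply the left cocycle identity to recombine the two inner factors, after which an antipode pairing $h_iS(h_j)=\varepsilon1$ collapses the $H$-part and normalization closes it. Identity (4.2) is the mirror computation with $S^{-1}$ in place of $S$ and the opposite leg order. Identity (4.3) is the heaviest of the three: I would expand its left-hand side by the cocycle identity to expose a factor of the form $\sigma(\cdot,S(\cdot))$, use (4.1) to trade one $\sigma$ for a $\sigma^{-1}$, and finally reorder the legs with $S(gh)=S(h)S(g)$ and $\Delta S=(S\o S)\tau\Delta$ into the $S(g)$-before-$S(h)$ pattern of the right-hand side.

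For part (2), the product-valued identities (4.6)--(4.11) all come from one device: specialise the laziness identity (for $\sigma$ or for $\sigma^{-1}$) at two antipode-twisted halves of $h$ so that exactly one middle product becomes a consecutive antipode pairing and collapses. For example, (4.6) is the $\sigma^{-1}$-laziness at $x=h_2$, $y=S^{-1}(h_1)$: since $(S^{-1}(h_1))_1\o(S^{-1}(h_1))_2=S^{-1}(h_{12})\o S^{-1}(h_{11})$, the right-hand side becomes $h_{21}S^{-1}(h_{12})\,\sigma^{-1}(h_{22},S^{-1}(h_{11}))$, and after rebalancing the $\a$-twists the middle product $h_{21}S^{-1}(h_{12})$ collapses through $h_2S^{-1}(h_1)=\varepsilon(h)1$, leaving exactly $\sigma^{-1}(h_2,S^{-1}(h_1))1$. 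The identities (4.7), (4.8), (4.10), (4.11) are the same move with the other choices of $S$ versus $S^{-1}$ and of leg order; (4.9) is the lone case in which no product collapses to $1$, and there the same laziness identity simply relocates the coproduct split, equating the two twisted forms. The two scalar identities (4.4) and (4.5) instead compare $\sigma$ with its antipode-reflection: here laziness is essential precisely because the twisting map $h\mapsto S(h_1)\o h_2$ is not a coalgebra morphism, so $\sigma\ast\sigma^{-1}=\varepsilon$ cannot be used directly; I would derive them by combining the cocycle identity of part (1) with laziness and the uniqueness of convolution inverses.

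The one genuine difficulty throughout is bookkeeping rather than ideas. Each identity is an alternating chain of coproduct splittings, antipodes and cocycle evaluations, and the Hom-setting decorates every term with powers of $\a$ coming from Hom-coassociativity and from the $\a^2$'s in the cocycle identity. The real work is to route these twists so that, at the decisive step, the two legs entering an antipode pairing carry matching powers of $\a$ (so that $S(h_1)h_2=\varepsilon1$ applies on the nose) while the surviving cocycle arguments stay $\a$-balanced (so that $\sigma\circ(\a\o\a)=\sigma$ absorbs the residue). Because $\sigma$ is $\a$-invariant this balancing can always be arranged, and I expect the hardest instances to be (4.3), (4.4) and (4.5), where the cocycle relation, the anti-comultiplicativity of $S$, and the $\a$-twists interact simultaneously; the remaining eight are routine once the device above is set up.
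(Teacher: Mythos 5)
Your outline is sound, but note that the paper does not actually prove this lemma: its entire proof is the sentence ``For the proof we can refer to \cite{CP}'', i.e.\ it defers to the classical (non-Hom) versions of these identities in Cuadra--Panaite and implicitly invokes the twisting principle. What you propose is essentially that classical argument transported to the Hom setting, so in spirit you are reconstructing the cited proof rather than diverging from it. Your mechanisms are the right ones: (4.1)--(4.3) do follow from the left cocycle identity plus an antipode collapse; the product-valued identities (4.6)--(4.11) are exactly instances of laziness of $\sigma$ or $\sigma^{-1}$ evaluated at antipode-twisted halves of $h$ followed by one collapse (your worked instance for (4.6) is correct, and your observation that (4.9) merely relocates the coproduct split is also correct); and (4.4)--(4.5) genuinely require the uniqueness-of-convolution-inverse argument you describe, since laziness applied directly at $(h_1,S(h_2))$ and hit with $\varepsilon$ returns only a tautology. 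Two caveats. First, you invoke laziness of $\sigma^{-1}$; the paper's Lemma 4.3 only verifies closure of $Z^2_L(H)$ under convolution, so you should record the one-line check that the inverse of a lazy cocycle is lazy before using it. Second, and more substantively, in a Hom-generalization the $\a$-bookkeeping you defer \emph{is} the mathematical content: for instance the argument $S(\a(h_2g_2))$ in (4.3) carries an explicit $\a$ that must be produced by Hom-coassociativity and then absorbed via $\sigma\circ(\a\o\a)=\sigma$, and until those powers are routed term by term the Hom-statements have not actually been verified. As a plan the proposal is correct and more informative than the paper's citation; as a proof it is incomplete precisely where the Hom-setting differs from the classical one.
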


\begin{proof}
For the proof we could refer to \cite{CP}.
\end{proof}

\begin{proposition}
Define the linear maps $S_1,S_2:H\rightarrow H$ by
\begin{eqnarray*}
&&S_1(h)=\sigma^{-1}(S(h_{21}),h_{22})S(\a^{-1}(h_1)),\\
&&S_2(h)=\sigma^{-1}(h_{22},S^{-1}(h_{21}))S^{-1}(\a^{-1}(h_1)).
\end{eqnarray*}
If $\sigma$ is lazy then $S_1,S_2:H(\sigma^{-1})\rightarrow H(\sigma)$ are Hom-algebra anti-isomorphisms, and for all $h\in H$,
\begin{eqnarray}
&&S_1(h_1)\cdot_\sigma h_2=\varepsilon(h)1=h_1\cdot_\sigma S_1(h_2),\\
&&S_2(h_2)\cdot_\sigma h_1=\varepsilon(h)1=h_2\cdot_\sigma S_2(h_1),\\
&&\Delta(S_1(h))=S_1(h_2)\o S(h_1),\\
&&\Delta(S_2(h))=S_2(h_2)\o S^{-1}(h_1).
\end{eqnarray}
\end{proposition}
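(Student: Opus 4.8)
The plan is to establish the four displayed identities (4.12)–(4.15) together with the anti-isomorphism claim as a single package, arranging the work so that each step feeds the next and drawing throughout on the $\sigma$-twisted antipode relations assembled in Lemma 4.6; those relations are exactly the cancellations that laziness makes available, so the argument is an orchestration of Lemma 4.6 rather than anything conceptually new. I would first dispose of the routine $\alpha$-compatibility $\alpha\circ S_i=S_i\circ\alpha$: applying $\alpha$ to the defining expression for $S_1(h)$ and using $S\circ\alpha=\alpha\circ S$, the invariance $\sigma^{-1}\circ(\alpha\otimes\alpha)=\sigma^{-1}$, and $\Delta\circ\alpha=(\alpha\otimes\alpha)\circ\Delta$, the map $\alpha$ simply propagates through every factor, and likewise for $S_2$. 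I would then prove the comultiplication formulas (4.14) and (4.15) next, since they are needed to expand the products in (4.12)/(4.13): the scalar $\sigma^{-1}(S(h_{21}),h_{22})$ pulls out of $\Delta$, after which I apply $\Delta\circ S=\tau\circ(S\otimes S)\circ\Delta$ to the surviving factor $S(\alpha^{-1}(h_1))$ and redistribute the iterated coproduct via Hom-coassociativity $(\Delta\otimes\alpha)\circ\Delta=(\alpha\otimes\Delta)\circ\Delta$; relabelling the Sweedler legs exhibits the answer as $S_1(h_2)\otimes S(h_1)$, and (4.15) is the identical computation with $S$ replaced by $S^{-1}$.

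I would then attack the antipode-type identities (4.12) and (4.13), which I expect to be the heart of the matter. To verify $S_1(h_1)\cdot_\sigma h_2=\varepsilon(h)1$ I substitute $\Delta(S_1(h_1))$ from (4.14) into $x\cdot_\sigma y=\sigma(x_1,y_1)\alpha^{-1}(x_2y_2)$; the left-hand side becomes an iterated-coproduct expression in $\sigma$, $\sigma^{-1}$, $S$ and the product of $H$, and it is precisely the lazy relations of Lemma 4.6(2) — notably those of the form $\sigma^{-1}(S(h_{12}),h_{21})S(h_{11})h_{22}=\sigma^{-1}(S(h_1),h_2)1$ and the symmetry $\sigma^{-1}(S(h_{21}),h_{22})S(h_1)=\sigma^{-1}(S(h_{11}),h_{12})S(h_2)$ — that collapse it to $\varepsilon(h)1$. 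The companion equality $h_1\cdot_\sigma S_1(h_2)=\varepsilon(h)1$, and both halves of (4.13) for $S_2$, run in parallel using the $S^{-1}$-versions in Lemma 4.6(2).

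Finally, for the anti-isomorphism I would show $S_1(h\cdot_{\sigma^{-1}}g)=S_1(g)\cdot_\sigma S_1(h)$ by the convolution technique: in $\mathrm{Hom}(H\otimes H,H(\sigma))$, with $H\otimes H$ carrying the tensor Hom-coalgebra structure, I would argue that both $h\otimes g\mapsto S_1(h\cdot_{\sigma^{-1}}g)$ and $h\otimes g\mapsto S_1(g)\cdot_\sigma S_1(h)$ are two-sided convolution inverses of the multiplication map of $H(\sigma^{-1})$, so that uniqueness of inverses forces them equal; the compatibility of $\Delta$ with $\cdot_{\sigma^{-1}}$ that this needs holds because $\sigma^{-1}$ is again lazy, and (4.12) discharges the inverse computation. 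Bijectivity then follows by checking that $S_2$, built from $S^{-1}$, is the two-sided composition inverse of $S_1$ — a direct calculation on the defining formulas using $S\circ S^{-1}=S^{-1}\circ S=\mathrm{id}$ and laziness — which upgrades the anti-homomorphisms to anti-isomorphisms.

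The main obstacle throughout is combinatorial rather than structural: each identity is a dense manipulation of three- and four-fold coproducts decorated with powers of $\alpha^{\pm1}$, with the real content hidden in Lemma 4.6. I expect the antipode identities (4.12)/(4.13) to be the crux, since that is where laziness is indispensable and where the twisted-antipode relations must be deployed in exactly the right arrangement of Sweedler legs; as a safeguard I would track the total $\alpha$-degree line by line, because the final answer must land on $\alpha^0$, and a mismatch there is the likeliest symptom of a slipped index.
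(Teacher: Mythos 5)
Your plan diverges from the paper's proof, and the divergence matters at exactly one point: bijectivity. The paper does not invert $S_1$ against $S_2$; it introduces the auxiliary map $\phi_\sigma(h)=\sigma(h_{11},S(h_{12}))S(\alpha^{-1}(h_2))$, shows via (4.1)--(4.2) that $S_2$ and $\phi_\sigma$ are mutual \emph{composition} inverses, and then uses laziness to identify $S_1=\phi_{\sigma^{-1}}$, so that $S_1$ is invertible with inverse the $\sigma$-analogue of $S_2$, namely $h\mapsto\sigma(h_{22},S^{-1}(h_{21}))S^{-1}(\alpha^{-1}(h_1))$; anti-multiplicativity is then extracted from the identity (4.3). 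Your final step --- ``check that $S_2$ is the two-sided composition inverse of $S_1$'' --- is false. Both $S_1$ and $S_2$ map $H(\sigma^{-1})$ to $H(\sigma)$, so the inverse of $S_1$ must go the other way and cannot be $S_2$; and concretely, in the untwisted limit $\alpha=\mathrm{id}$ one finds
$$S_2(S_1(h))=\sigma^{-1}(S(h_4),h_5)\,\sigma^{-1}(S(h_1),h_2)\,h_3=(u\ast\mathrm{id}\ast u)(h),\qquad u(h)=\sigma^{-1}(S(h_1),h_2),$$
which equals $h$ only if $u\ast u=\varepsilon$. That fails already for $H=kG$ with a group $2$-cocycle having $\sigma(g^{-1},g)^2\neq 1$ (every such cocycle is lazy by cocommutativity). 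So your argument as written does not establish that $S_1,S_2$ are bijections, which is the content of ``anti-isomorphism'' beyond the anti-homomorphism property.

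Two smaller remarks. For (4.14) you claim the identity follows from $\Delta\circ S=\tau\circ(S\otimes S)\circ\Delta$ plus a relabelling of Sweedler legs; in fact the cocycle factor sits on the last two legs of $\Delta(S_1(h))$ but on legs of $h_2$ in $S_1(h_2)\otimes S(h_1)$, and moving it requires the laziness identity (4.9), $\sigma^{-1}(S(h_{21}),h_{22})S(h_1)=\sigma^{-1}(S(h_{11}),h_{12})S(h_2)$, not Hom-coassociativity alone. Your convolution-uniqueness route to $S_1(h\cdot_{\sigma^{-1}}g)=S_1(g)\cdot_\sigma S_1(h)$ is workable in principle (two-sided inverses are unique in the Hom-convolution algebra when $\alpha$ is bijective), but verifying that $h\otimes g\mapsto S_1(g)\cdot_\sigma S_1(h)$ is a right convolution inverse of the $\sigma^{-1}$-multiplication forces you to regroup mixed $\cdot_\sigma$/$\cdot_{\sigma^{-1}}$ products via the cocycle condition, which is essentially the computation the paper packages as identity (4.3); it is not a shortcut.
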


\begin{proof}
Define a linear map $\phi_\sigma:H(\sigma)\rightarrow H(\sigma^{-1})$ by
$$\phi_\sigma(h)=\sigma(h_{11},S(h_{12}))S(\a^{-1}(h_2)).
$$
Then by (4.2) and (4.3) we know $S_2$ and $\phi_\sigma$ are mutual inverses. If $\sigma$ is lazy, by (4.10) $S_1=\phi_{\sigma^{-1}}$. Hence $S_1$ is invertible. Finally by $(4.4)$ we obtain that $S_1$ and $S_2$ are anti-isomorphisms.

The rest of the proof is an easy exercise.

\end{proof}

Let $(H,\a)$ be a Hom-Hopf algebra. An $H$-bicomodule $(A,\b)$ is both a left and right $H$-comodule with comodule structures $A\rightarrow A\o H,~a\mapsto a_{(0)}\o a_{(1)}$ and $A\rightarrow H\o A,~a\mapsto a_{[-1]}\o a_{[0]}$ such that for all $a\in A$,
$$\a(a_{[-1]})\o a_{[0](0)}\o a_{[0](1)}=a_{(0)[-1]}\o a_{(0)[0]}\o \a(a_{(1)}).$$

Recall from \cite{MP} that the diagonal crossed product $H^*\bowtie A$ is equal to $H^*\o A$ as vector space with the multiplication
$$(p\bowtie a)(q\bowtie b)=p[(\a^{-3}(a_{[-1]})\rightharpoonup \a^{*2}(q))\leftharpoonup\a^{-3}(S^{-1}(a_{[0](1)}))]\bowtie\a^{-2}(a_{[0](0)})b,$$
for all $p,q\in H^*$ and $a,b\in A$.

Furthermore the space $H^*\bowtie A$ becomes a $D(H)$-bicomodule algebra with the following structures:
\begin{eqnarray*}
&&H^*\bowtie A\rightarrow H^*\bowtie A\o D(H),~~p\bowtie a\mapsto (p_2\bowtie a_{(0)})(p_1\o a_{(1)}),\\
&&H^*\bowtie A\rightarrow D(H)\o H^*\bowtie A,~~p\bowtie a\mapsto (p_2\bowtie a_{[-1]})(p_1\o a_{[0]}).
\end{eqnarray*}

Let $\sigma:H\o H\rightarrow k$ be a normalized and convolution invertible left lazy Hom-2-cocycle, then $\bar{\sigma}:D(H)\o D(H)\rightarrow k$ given by
$$\bar{\sigma}(p\o h,q\o g)=p(1)q(S^{-1}(\a^{-2}(h_{22}))\a^{-1}(h))\sigma(h_{21},\a^{2}(g))$$
is a normalized and convolution invertible lazy Hom-2-cocycle with the convolution inverse $$\bar{\sigma}^{-1}(h,g)=p(1)q(S^{-1}(\a^{-2}(h_{22}))\a^{-1}(h))\sigma^{-1}(h_{21},\a^{2}(g)).$$

\begin{proposition}
Let $\sigma:H\o H\rightarrow k$ be a normalized and convolution invertible left lazy Hom-2-cocycle. Consider the $H$-bicomodule algebra $H(\sigma)$. Then $H^*\bowtie H(\sigma)=D(H)(\bar{\sigma})$ as $D(H)$-bicomodule algebras. Moreover $\bar{\sigma}$ is unique with this property.
\end{proposition}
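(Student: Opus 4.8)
The plan is to show that $H^{*}\bowtie H(\sigma)$ and $D(H)(\bar\sigma)$ are literally the \emph{same} $D(H)$-bicomodule algebra structure carried on the common underlying space $H^{*}\otimes H$, so that equality (not merely isomorphism) holds. Since $\bar\sigma$ is already a normalized, convolution invertible, lazy Hom-$2$-cocycle on $D(H)$, Lemma 4.6 applied to $D(H)$ in place of $H$ guarantees that $D(H)(\bar\sigma)$ is a $D(H)$-bicomodule algebra whose two coactions are both $\Delta_{D(H)}$ and whose product is $X\cdot_{\bar\sigma}Y=\bar\sigma(X_{1},Y_{1})(\a\otimes\a)^{-1}(X_{2}Y_{2})$. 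Thus it suffices to match (a) the two coactions and (b) the multiplication.

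For (a) I would use that the passage $A\mapsto A(\sigma)$ deforms only the product and leaves the coalgebra and comodule data untouched (Lemma 4.6); hence $H(\sigma)$ and $H$ carry \emph{identical} $H$-bicomodule structures, both given by $\Delta_{H}$, so that $a_{(0)}\otimes a_{(1)}=a_{[-1]}\otimes a_{[0]}=\Delta_{H}(a)$. Substituting this into the two $D(H)$-coaction formulas recorded just before the statement for the diagonal crossed product $H^{*}\bowtie A$ collapses each of them to $(p_{2}\bowtie a_{1})\otimes(p_{1}\otimes a_{2})$, i.e. exactly $\Delta_{D(H)}(p\bowtie a)$. Hence both coactions of $H^{*}\bowtie H(\sigma)$ coincide with the bicomodule structure of $D(H)(\bar\sigma)$; this part is immediate.

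The substance is (b). I would expand $(p\bowtie h)(q\bowtie g)$ in $H^{*}\bowtie H(\sigma)$ using the diagonal crossed product formula with $a_{[-1]}\otimes a_{[0]}=\Delta_{H}$: the $H^{*}$-leg is the convolution of $p$ with the $\rightharpoonup/\leftharpoonup$ adjoint hit of $h$ on $q$, while the $H$-leg is $\a^{-2}(h_{21})\cdot_{\sigma}g$ taken in $H(\sigma)$, the only place where $\sigma$ enters. Writing $\cdot_{\sigma}$ out as $x\cdot_{\sigma}y=\sigma(x_{1},y_{1})\a^{-1}(x_{2}y_{2})$ extracts a scalar $\sigma(\cdot,\cdot)$ and splits the $H$-leg by one further comultiplication. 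The crux is then to move this scalar to the front: using the laziness identity $\sigma(h_{1},g_{1})h_{2}g_{2}=h_{1}g_{1}\sigma(h_{2},g_{2})$ (together with normalization and $\sigma\circ(\a\otimes\a)=\sigma$) one commutes the $\sigma$-factor past the remaining comultiplications and past the $H^{*}$-hit actions, so that after reindexing it becomes precisely the scalar $\sigma(h_{21},\a^{2}(g))$ appearing in $\bar\sigma$, with the $p(1)$ and the $q\bigl(S^{-1}(\a^{-2}(h_{22}))\a^{-1}(h_{1})\bigr)$ factors produced by the adjoint action, and the remaining (now $\sigma$-free) expression being exactly the undeformed $D(H)$-product $(\a\otimes\a)^{-1}(X_{2}Y_{2})$. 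Matching the two results term by term — in particular reconciling all the $\a^{-k}$ twists and the triple Sweedler indices — yields $(p\bowtie h)(q\bowtie g)=(p\otimes h)\cdot_{\bar\sigma}(q\otimes g)$. I expect this bookkeeping of Hom-twists and Sweedler legs, and the correct single application of laziness, to be the main obstacle; everything else is formal.

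Finally, for uniqueness I would note that any lazy Hom-$2$-cocycle $\tau$ on $D(H)$ is recoverable from its deformed product, since applying the counit (an algebra map with $\varepsilon\circ\a=\varepsilon$) to $X\cdot_{\tau}Y=\tau(X_{1},Y_{1})(\a\otimes\a)^{-1}(X_{2}Y_{2})$ gives $\varepsilon_{D(H)}(X\cdot_{\tau}Y)=\tau(X,Y)$. Hence the $D(H)$-bicomodule algebra structure of $D(H)(\tau)$ determines $\tau$; if a second lazy cocycle produced the same structure as $H^{*}\bowtie H(\sigma)=D(H)(\bar\sigma)$, it would have to coincide with $\bar\sigma$.
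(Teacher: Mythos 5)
Your proposal follows essentially the same route as the paper: expand $(p\bowtie h)(q\bowtie g)$ via the diagonal crossed product formula with the $H(\sigma)$-product $\a^{-2}(h_{21})\cdot_\sigma g$ in the second leg, use laziness to slide the resulting scalar $\sigma(-,-)$ past the $\leftharpoonup S^{-1}(h_{22})$ factor so that it reassembles into $\bar\sigma$ applied to the first legs of $\Delta_{D(H)}$, and recover uniqueness by applying the counit of $D(H)$ (the paper's ``apply $1\o\varepsilon$''). Your observations that the coactions are untouched by the deformation and that $\varepsilon_{D(H)}(X\cdot_\tau Y)=\tau(X,Y)$ (via $(id\o\varepsilon)\Delta=\a$ and $\tau\circ(\a\o\a)=\tau$) are both correct and match the paper. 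The one caveat: the multiplication identity, which is the entire substance of the paper's proof, is only described in your part (b) as bookkeeping you ``expect'' to close; in the Hom setting the alignment of the $\a^{-k}$ powers and the triple Sweedler indices is exactly where such a claim can fail, so as written this is a correct plan with the decisive verification still outstanding rather than a complete proof.
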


\begin{proof}
For all $h,g\in H$ and $p,q\in H^*$,
$$\begin{aligned}
&(p\bowtie h)(q\bowtie g)\\
=&p[(\a^{-3}(h_1)\rightharpoonup \a^{*2}(q))\leftharpoonup\a^{-3}(S^{-1}(h_{22}))]\bowtie\a^{-2}(h_{21})\cdot_\sigma g\\
=&p[(\a^{-3}(h_1)\rightharpoonup \a^{*2}(q))\leftharpoonup\a^{-3}(S^{-1}(h_{22}))]\bowtie\sigma(\a^{-2}(h_{211}),g_1)\a^{-3}(h_{212})\a^{-1}(g_2)\\
=&p[(\a^{-6}(h_{21})S^{-1}(\a^{-7}(h_{122}))\a^{-5}(h_{11})\rightharpoonup \a^{*2}(q))\leftharpoonup\a^{-4}(S^{-1}(h_{222}))]\\ &\bowtie\sigma(\a^{-2}(h_{121}),g_1)\a^{-3}(h_{221})\a^{-1}(g_2)\\
=&p[(\a^{-4}(h_{21})\rightharpoonup(S^{-1}(\a^{-7}(h_{122}))\a^{-6}(h_{11}))\rightharpoonup \a^{*3}(q))\leftharpoonup\a^{-4}(S^{-1}(h_{222}))]\\ &\bowtie\sigma(\a^{-2}(h_{121}),g_1)\a^{-3}(h_{221})\a^{-1}(g_2)\\
=&q_2(S^{-1}(\a^{-2}(h_{122}))\a^{-1}(h_{11}))p[(\a^{-4}(h_{21})\rightharpoonup \a^{*3}(q_1))\leftharpoonup\a^{-4}(S^{-1}(h_{222}))]\\ &\bowtie\sigma(\a^{-2}(h_{121}),g_1)\a^{-3}(h_{221})\a^{-1}(g_2)\\
=&p_1(1)q_2(S^{-1}(\a^{-2}(h_{122}))\a^{-1}(h_{11}))\sigma(\a^{-2}(h_{121}),g_1)\\
&\a^*(p_2)[(\a^{-4}(h_{21})\rightharpoonup \a^{*3}(q_1))\leftharpoonup\a^{-4}(S^{-1}(h_{222}))]\bowtie\a^{-3}(h_{221})\a^{-1}(g_2)\\
=&p_1(1)q_2(S^{-1}(\a^{-2}(h_{122}))\a^{-1}(h_{11}))\sigma(\a^{-2}(h_{121}),g_1)\\
&\a^*(p_2)[(\a^{-4}(h_{21})\rightharpoonup \a^{*3}(q_1))\leftharpoonup\a^{-4}(S^{-1}(h_{222}))]\bowtie\a^{-3}(h_{221})\a^{-1}(g_2)\\
=&\bar{\sigma}(p_2\o h_1,q_2\o g_2)(\a^*(p_1)\o \a^{-1}(h_2))(\a^*(q_1)\o \a^{-1}(g_2))\\
=&(p\o h)\cdot_{\bar{\sigma}}(q\o g).
\end{aligned}$$
Thus $H^*\bowtie H(\sigma)$ and $D(H)(\bar{\sigma})$ have the same multiplication as well as the $D(H)$-bicomodule structure. For the uniqueness of $\bar{\sigma}$, applying $1\o \varepsilon$ to the above equation.

The proof is completed.
\end{proof}

\begin{lemma}
Let $(H,\a)$ be a Hom-Hopf algebra, $(B,\b)$ a left $H$-module algebra (with action $h\cdot b$) and $(A,\g)$ a left $H$-comodule algebra (with coaction $a\mapsto a_{(-1)}\o a_{(0)}$). Define multiplication on $B\o A$ by
$$(b\o a)(b'\o a')=b(\a^{-2}(a_{(-1)})\cdot \b^{-1}(b'))\o \g^{-1}(a_{(0)})a',$$
for all $b,b'\in B$ and $a,a'\in A$. Then $(B\o A,\b\o\g)$ is a Hom-algebra, which is denoted by $B\ltimes A$.
\end{lemma}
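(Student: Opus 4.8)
The plan is to verify the Hom-algebra axioms for $(B\o A,\b\o\g)$ directly, since the structure is built explicitly from a left $H$-module algebra $(B,\b)$ and a left $H$-comodule algebra $(A,\g)$. There are three things to check: that $\b\o\g$ is multiplicative with respect to the new product, that $1_B\o1_A$ is a two-sided Hom-unit, and that Hom-associativity $(\b\o\g)(x)\,(yz)=(xy)\,(\b\o\g)(z)$ holds. The module and comodule structures already supply the familiar compatibilities $\b(h\c b)=\a(h)\c\b(b)$, $\a^2(h)\c(bb')=(h_1\c b)(h_2\c b')$, $h\c1_B=\varepsilon(h)1_B$ on the $B$-side, and $(\a_M\o\a_C)\circ\rho=\rho\circ\a_M$ together with the Hom-coassociativity of the coaction on the $A$-side; I expect every axiom to reduce to a bookkeeping exercise in these identities.

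First I would check multiplicativity of $\b\o\g$. Applying $\b\o\g$ to $(b\o a)(b'\o a')=b(\a^{-2}(a_{(-1)})\c\b^{-1}(b'))\o\g^{-1}(a_{(0)})a'$ and using that $\b$ is an algebra map on $B$, that $\b(h\c b')=\a(h)\c\b(b')$, and that $\g$ is an algebra map on $A$, I would compare the result with $(\b(b)\o\g(a))(\b(b')\o\g(a'))$; the coaction-twisting identity $(\a\o\g)\circ\rho=\rho\circ\g$ is exactly what makes $\a^{-2}(\g(a)_{(-1)})$ and $\g(\g^{-1}(a_{(0)}))$ line up with the twisted powers of $\b$ and $\a$. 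For the unit, I would compute $(1_B\o1_A)(b\o a)$ and $(b\o a)(1_B\o1_A)$ and use $1_A{}_{(-1)}\o1_A{}_{(0)}=1_H\o1_A$, the unit axiom $1_H\c b=\b(b)$, and the Hom-algebra unit conditions in $B$ and $A$ to get $\b(b)\o\g(a)=(\b\o\g)(b\o a)$ in both cases.

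The main work, and the step I expect to be the genuine obstacle, is Hom-associativity. I would expand both $(\b(b)\o\g(a))\big((b'\o a')(b''\o a'')\big)$ and $\big((b\o a)(b'\o a')\big)(\b(b'')\o\g(a''))$ into sums and match them term by term. On the $B$-component this forces me to use the module-algebra rule $\a^2(h)\c(b'b'')=(h_1\c b')(h_2\c b'')$ to split the action of $a_{(-1)}$ across the product, together with Hom-associativity of $B$ and the compatibility $\b(h\c b')=\a(h)\c\b(b')$; on the $A$-component I must use Hom-associativity of $A$ and, crucially, the Hom-coassociativity axiom $(\rho\o\a_C)\circ\rho=(\a_M\o\D)\circ\rho$ so that the two ways of splitting the coaction $a\mapsto a_{(-1)}\o a_{(0)}$ agree. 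Tracking the precise powers of $\a,\b,\g$ and the indices $a_{(-1)1},a_{(-1)2}$ against $a_{(0)(-1)}$ is where the calculation is delicate, and reconciling these twist exponents is the heart of the argument.

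Since the statement asserts only that $(B\o A,\b\o\g)$ is a Hom-algebra, no coalgebra or antipode compatibilities enter, so once these three axioms are verified the proof is complete. I would present the multiplicativity of $\b\o\g$ and the unit checks briefly and devote the main display to the associativity computation, invoking the module-algebra and comodule-algebra axioms at each equality; given the routine (if lengthy) nature of the manipulations, I would expect to label it \emph{straightforward} after exhibiting the key associativity chain.
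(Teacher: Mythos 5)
Your proposal is correct and takes the same route the paper intends: the paper's own proof of this lemma is the single line ``The proof is straightforward,'' i.e.\ exactly the direct verification you outline. Checking multiplicativity of $\b\o\g$, the unit $1_B\o 1_A$, and Hom-associativity via the module-algebra axiom $\a^2(h)\cdot(bb')=(h_1\cdot b)(h_2\cdot b')$ together with the multiplicativity and Hom-coassociativity of the coaction (in their left-comodule form) supplies precisely the details the paper omits.
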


\begin{proof}
The proof is straightforward.

\end{proof}

\begin{proposition}
Let $(H,\a)$ be a Hom-Hopf algebra, $(B,\b)$ a left $H$-module algebra and $(A,\g)$ a left $H$-comodule algebra. Suppose that $(H,B)$ is an admissible pair, then $B\ltimes A$ becomes a left $B\t A$-comodule algebra, with coaction
$$\bar{\r}:B\ltimes A\rightarrow (B\t H)\o(B\ltimes A),~~b\ltimes a\mapsto (b_1\times\a^{-2}(b_{2(-1)})\a^{-1}(a_{(-1)}))\o (\b^{-1}(b_{2(0)})\ltimes a_{(0)}),$$
for all $a\in A,b\in B$.
\end{proposition}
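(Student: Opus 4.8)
The plan is to verify directly that the proposed coaction $\bar{\r}$ endows $B\ltimes A$ with the structure of a left $B\times H$-comodule algebra, where $B\times H$ is understood to be the Radford biproduct (the smash coproduct Hom-coalgebra underlying the admissible pair $(H,B)$, which by Corollary~3.6 is a Hom-Hopf algebra since $\sigma$ is trivial). There are two independent bundles of axioms to check: first, that $\bar{\r}$ makes $(B\ltimes A,\b\o\g)$ a left comodule over the Hom-coalgebra $(B\times H,\b\o\a)$; and second, that $\bar{\r}$ is a morphism of Hom-algebras from $B\ltimes A$ to $(B\times H)\o(B\ltimes A)$. I would carry these out in that order.

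First I would establish the comodule axioms from the Preliminary section, adapted to the left-handed setting. The compatibility with the structure maps, $(\b\o\a)\otimes(\b\o\g))\circ\bar{\r}=\bar{\r}\circ(\b\o\g)$, should follow mechanically by pushing $\b\o\g$ through the formula and using that $\b,\g,\a$ commute with the action and coaction (the comodule-algebra conditions $\g\circ(\text{coaction})$ and the weak action identity $\b(h\cdot b)=\a(h)\cdot\b(b)$). The counit condition, applying $\varepsilon_{B\times H}=\varepsilon_B\o\varepsilon_H$ to the first tensor factor and recovering $\b\o\g$, will use $A_1$--$A_2$-type normalizations, namely $\varepsilon_B(b_1)=\varepsilon$ collapsing $b_1$ and the comodule counit $\varepsilon(a_{(-1)})a_{(0)}=\g(a)$. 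The coassociativity condition is the delicate one: I must expand $(\bar{\r}\o(\b\o\a))\circ\bar{\r}$ and $((\b\o\g)\o\Delta_{B\times H})\circ\bar{\r}$ and match them, which requires the explicit comultiplication of the smash coproduct recalled at the end of Section~1, together with the Hom-comodule coalgebra relations on $B$ and the left $H$-comodule coassociativity on $A$.

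Second, and this is where the genuine content lies, I would check that $\bar{\r}$ is multiplicative: $\bar{\r}((b\ltimes a)(b'\ltimes a'))=\bar{\r}(b\ltimes a)\bar{\r}(b'\ltimes a')$, using the multiplication of $B\ltimes A$ from Lemma~4.9 on the left and the tensor-product Hom-algebra multiplication (with the smash product on the $B\times H$ factor, as in Section~1) on the right. Both sides will involve the action $\a^{-2}(a_{(-1)})\cdot\b^{-1}(b')$ interacting with the coaction on $b_2$; the key inputs are that $A$ is a left $H$-comodule algebra (so $\r$ is multiplicative on $A$), that $B$ is a left $H$-module algebra together with the Hom-comodule coalgebra compatibility that governs $b_{2(-1)}$ when $B$ is an admissible pair, and the module-algebra identity $\a^2(h)\cdot(bb')=(h_1\cdot b)(h_2\cdot b')$. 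Preservation of the unit, $\bar{\r}(1_B\ltimes 1_A)=1_{B\times H}\o(1_B\ltimes 1_A)$, reduces to $A_8$ and $A_9$ ($\Delta_B(1)=1\o1$ and $\r(1)=1\o1$).

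The main obstacle will be the multiplicativity computation, because it forces the coaction on $B$ (the $b_{2(-1)}$ terms) to be threaded through the smash product on $B\times H$ in a way that must reproduce exactly the action-twist $\a^{-2}(a_{(-1)})\cdot\b^{-1}(b')$ appearing in the definition of $B\ltimes A$. Getting the bookkeeping of the $\a$-powers to balance is the heart of it; I expect to invoke the admissible-pair structure precisely to convert the comultiplication of $b_2$ in $B\times H$ into the correct interleaving of the $B$-module action and the $A$-comodule coaction. Once that single identity is in place, the remaining comodule and unit/counit checks are routine verifications of the kind the paper elsewhere dispatches as ``straightforward,'' so I would state them briefly and concentrate the written proof on the multiplicativity step.
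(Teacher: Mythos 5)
Your plan coincides with the paper's own proof: the authors verify the coassociativity of $\bar{\r}$ directly (using the smash coproduct comultiplication on $B\t H$ and the comodule coalgebra structure on $B$ coming from the admissible pair), dismiss the counit condition as easy, and then devote the bulk of the argument to the multiplicativity identity $\bar{\r}((b\ltimes a)(b'\ltimes a'))=\bar{\r}(b\ltimes a)\bar{\r}(b'\ltimes a')$, exactly as you propose and using exactly the inputs you name (the module-algebra identity on $B$, the comodule-algebra multiplicativity on $A$, and the admissible-pair compatibility governing $b_{2(-1)}$). Your identification of the multiplicativity computation and the balancing of the $\a$-powers as the heart of the proof is accurate.
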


\begin{proof}
Firstly for all $a\in A,b\in B$,
$$\begin{aligned}
&((\b\o\a)\o\bar{\r})\bar{\r}(b\ltimes a)\\
=&(\b(b_1)\times\a^{-1}(b_{2(-1)})a_{(-1)})\o(\b^{-1}(b_{2(0)1})\t \a^{-3}((b_{2(0)2(-1)}))\a^{-1}(a_{(0)(-1)}))\\
&\o(\b^{-2}(b_{2(0)2})\o a_{(0)(0)})\\
=&(\b(b_1)\times\a^{-3}(b_{21(-1)}b_{22(-1)})a_{(-1)})\o(\b^{-1}(b_{21(0)})\t \a^{-3}(b_{22(0)(-1)})\a^{-1}(a_{(0)(-1)}))\\
&\o(\b^{-2}(b_{22(0)})\o a_{(0)(0)})\\
=&(\b(b_{1})\t \a^{-2}(b_{21(-1)})(\a^{-3}(b_{22(-1)})\a^{-1}(a_{(-1)})))\\
&\o(\b^{-1}(b_{21(0)})\t \a^{-3}(b_{22(0)(-1)})\a^{-1}(a_{(0)(-1)}))\o(\b^{-2}(b_{22(0)(0)})\ltimes a_{(0)(0)})\\
=&(b_{11}\t \a^{-2}(b_{12(-1)})(\a^{-3}(b_{2(-1)1})\a^{-2}(a_{(-1)1})))\o(\b^{-1}(b_{12(0)})\t \a^{-2}(b_{2(-1)2})\a^{-1}(a_{(-1)2}))\\
&\o(b_{2(0)}\ltimes \g(a_{(0)}))\\
=&(\Delta\o(\b\o\g))\bar{\r}(b\ltimes a),
\end{aligned}$$
and the counit is easy to check. Then for all $a,a'\in A$ and $b,b'\in B$,
$$\begin{aligned}
&\bar{\r}((b\ltimes a)(b'\ltimes a'))\\
=&\bar{\r}(b(\a^{-2}(a_{(-1)})\cdot \b^{-1}(b'))\ltimes\g^{-1}(a_{(0)})a')\\
=&(b(\a^{-2}(a_{(-1)}))\cdot\b^{-1}(b'))_1\\
&\t \a^{-2}((b(\a^{-2}(a_{(-1)}))\cdot\b^{-1}(b'))_{2(-1)})(\a^{-2}(a_{(0)(-1)})\a^{-1}(a'_{-1}))\\
&\b^{-1}((b(\a^{-2}(a_{(-1)}))\cdot\b^{-1}(b'))_{2(0)})\ltimes\g^{-1}(a_{(0)(0)})a'_{(0)}\\
=&b_1(\a^{-2}(b_{2(-1)})\cdot(\a^{-4}(a_{(-1)11})\cdot\b^{-2}(b'_1)))\\
&\t \a^{-2}(\a^{-1}(b_{2(0)(-1)})(\a^{-3}(a_{(-1)12})\cdot \b^{-1}(b'_2))_{(-1)})\\
&(\a^{-2}(a_{(-1)2})\a^{-1}(a'_{(-1)}))\o\b^{-2}(b_{2(0)(0)})\b^{-1}((\a^{-3}(a_{(-1)12})\cdot \b^{-1}(b'_2))_{(0)})\ltimes a_{(0)}a'_{(0)}\\
=&b_1(\a^{-2}(b_{2(-1)})\cdot(\a^{-4}(a_{(-1)11})\cdot\b^{-2}(b'_1)))\\ &\t[\a^{-3}(b_{2(0)(-1)})\a^{-3}((\a^{-3}(a_{(-1)12})\cdot \b^{-1}(b'_2))_{(-1)}a_{(-1)2})]a'_{(-1)}\\
&\o\b^{-2}(b_{2(0)(0)})\b^{-1}((\a^{-3}(a_{(-1)12})\cdot \b^{-1}(b'_2))_{(0)})\ltimes a_{(0)}a'_{(0)}\\
=&b_1(\a^{-2}(b_{2(-1)})\cdot(\a^{-3}(a_{(-1)1})\cdot\b^{-2}(b'_1)))\t \a^{-3}[b_{2(0)(-1)}(\a^{-1}(a_{(-1)21}) b'_{2(-1)})]a'_{(-1)}\\
&\o\b^{-2}(b_{2(0)(0)})(\a^{-3}(a_{(-1)22})\cdot \b^{-2}(b'_{2(0)}))\ltimes a_{(0)}a'_{(0)}\\
=&b_1((\a^{-4}(b_{2(-1)1})\a^{-3}(a_{(-1)1}))\cdot\b^{-1}(b'_1)))\\
&\t[\a^{-3}(b_{2(-1)2})\a^{-3}(a_{(-1)21})] [\a^{-2}(b'_{2(-1)})\a^{-1}(a'_{(-1)})]\\
&\o\b^{-1}(b_{2(0)})(\a^{-3}(a_{(-1)22})\cdot \b^{-2}(b'_{2(0)}))\ltimes a_{(0)}a'_{(0)}\\
=&b_1(\a^{-4}(b_{2(-1)1})\a^{-3}(a_{(-1)1}))\cdot\b^{-1}(b'_1)))\\
&\t[\a^{-3}(b_{2(-1)2})\a^{-2}(a_{(-1)2})] [\a^{-2}(b'_{2(-1)})\a^{-1}(a'_{(-1)})]\\
&\o\b^{-1}(b_{2(0)})(\a^{-2}(a_{(0)(-1)})\cdot \b^{-2}(b'_{2(0)}))\ltimes \g^{-1}(a_{(0)(0)})a'_{(0)}\\
=&\bar{\r}(b\ltimes a)\bar{\r}(b'\ltimes a').
\end{aligned}$$
The proof is completed.

\end{proof}

\begin{proposition}
Let $(H,\a)$ be a Hom-Hopf algebra, $(B,\b)$ a Hom algebra and Hom coalgebra, and $(H,B)$ an admissible pair. Suppose that $\sigma$ be a normalized and convolution invertible right Hom-2-cocycle on $H$ and consider the left $H$-comodule algebra $H_\sigma$. Then the map $\tilde{\sigma}:(B\t H)\o(B\t H)\rightarrow k$ given by
$$\tilde{\sigma}(b\t h,b'\t h')=\varepsilon_B(b)\varepsilon_B(b')\sigma(h,h'),$$
is a normalized and convolution invertible right Hom-2-cocycle on $B\t H$, and $(B\t H)_{\tilde{\sigma}}=B\ltimes H_\sigma$ as left $B\t H$-comodule algebra. Moreover $\tilde{\sigma}$ is unique with this property.
\end{proposition}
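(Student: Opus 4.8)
The statement asserts three things about the map $\tilde{\sigma}$ on $B\t H$: that it is a normalized, convolution invertible right Hom-2-cocycle; that the twisted Hom-algebra $(B\t H)_{\tilde{\sigma}}$ coincides with $B\ltimes H_\sigma$; and that $\tilde{\sigma}$ is the unique 2-cocycle with this property. My plan is to dispatch these in that order, since each relies on the tools already developed: Lemma 4.13 (the $B\ltimes A$ construction) with $A=H_\sigma$, the defining relations of a right Hom-2-cocycle from the start of this section, and the pattern of proof used in Proposition 4.10, whose structure this result mirrors almost exactly.

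\textbf{Step 1: $\tilde{\sigma}$ is a normalized convolution invertible right Hom-2-cocycle.}
First I would check normalization and $\tilde{\sigma}\circ((\b\o\a)\o(\b\o\a))=\tilde{\sigma}$, both of which reduce instantly to the corresponding properties of $\sigma$ together with $\varepsilon_B\circ\b=\varepsilon_B$. The convolution inverse is the obvious candidate $\tilde{\sigma}^{-1}(b\t h,b'\t h')=\varepsilon_B(b)\varepsilon_B(b')\sigma^{-1}(h,h')$; verifying $\tilde{\sigma}\ast\tilde{\sigma}^{-1}=\varepsilon\o\varepsilon$ again factors through the counits on $B$, so only the $H$-part survives and it collapses by $\sigma\ast\sigma^{-1}=\varepsilon_H\o\varepsilon_H$. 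For the right Hom-2-cocycle identity on $B\t H$ I would expand both sides using the Hom-smash coproduct comultiplication $\Delta(b\t h)=b_1\t\b^{-2}(b_{2(-1)})\a^{-1}(h_1)\o\b^{-1}(b_{2(0)})\t h_2$; because every $\tilde{\sigma}$-factor carries $\varepsilon_B$ on its $B$-components, all the comodule-twisting terms and all the $b$-arguments are annihilated by the counit, leaving exactly the right Hom-2-cocycle equation for $\sigma$ on the $H$-slots, which holds by hypothesis.

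\textbf{Step 2: identifying the twisted algebra with $B\ltimes H_\sigma$.}
This is the computational heart. The twisted multiplication on $(B\t H)_{\tilde{\sigma}}$ is $x\ _{\tilde\sigma}\!\cdot\, y=(\b\o\a)^{-1}(x_1y_1)\,\tilde{\sigma}(x_2,y_2)$; I would insert $\Delta$ on both $x=b\t h$ and $y=b'\t h'$, use the smash-product multiplication of the Hom-smash coproduct/biproduct $B\t H$ to evaluate the product of the first tensorands, and then apply $\tilde{\sigma}$ to the second tensorands. The $\varepsilon_B$'s in $\tilde{\sigma}$ force the surviving expression to match the $B\ltimes H_\sigma$ multiplication from Lemma 4.13, namely $b(\a^{-2}(h_{(-1)})\cdot\b^{-1}(b'))\o\a^{-1}(h_{(0)})\,_{\!\sigma}h'$ where $H_\sigma$ is regarded as a left $H$-comodule algebra via $\Delta$ with the $\sigma$-twisted product $\ _{\sigma}\!\cdot$. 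I would then separately confirm that the left $B\t H$-comodule structures agree; since twisting by a 2-cocycle does not alter the comultiplication, the $D(H)$-style bicomodule coaction of $(B\t H)_{\tilde\sigma}$ is literally the coaction of $B\ltimes H_\sigma$, so this part is immediate once the multiplications are matched.

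\textbf{Step 3: uniqueness, and the main obstacle.}
For uniqueness I would follow Proposition 4.10 verbatim: apply $1\o\varepsilon$ (i.e. evaluate the comodule-algebra isomorphism against the counit on the appropriate factor) to the identity $(B\t H)_{\tilde\sigma}=B\ltimes H_\sigma$, which pins down $\tilde{\sigma}$ uniquely in terms of the given data. The step I expect to be the real obstacle is Step 2, specifically keeping the $\a$- and $\b$-powers consistent while expanding the iterated coproduct of the Hom-smash coproduct: the Hom-setting forces bookkeeping of structure maps at every application of coassociativity and of the module/comodule compatibility axioms, and it is easy to accumulate an off-by-one power of $\a$ or $\b$. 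The conceptual content is light---everything is forced by the counit annihilating the $B$-arguments of $\tilde{\sigma}$---but making the twisted product land \emph{exactly} on the Lemma 4.13 formula, rather than on a version differing by an innocuous-looking application of $\b\o\g$, will require careful normalization of the Hom-twists.
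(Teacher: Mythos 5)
Your proposal is correct in substance and, for the identification $(B\t H)_{\tilde\sigma}=B\ltimes H_\sigma$ (your Step 2) and the uniqueness argument (your Step 3), it follows exactly the paper's route: expand the twisted product via the smash coproduct, let the $\varepsilon_B$'s and the comodule-coalgebra counit condition kill the $B$-twisting terms, land on the Lemma on $B\ltimes A$ with $A=H_\sigma$, and recover uniqueness by applying $\varepsilon_B\o\varepsilon_H$ to the resulting identity of multiplications. The one place you diverge is Step 1: you propose to verify the right Hom-2-cocycle identity for $\tilde\sigma$ \emph{directly}, by expanding both sides on $B\t H$ and reducing to the cocycle identity for $\sigma$. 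That verification does go through (the products $x_1y_1$ appearing inside $\tilde\sigma$ are handled by $\varepsilon_B$ being a Hom-algebra map and $\varepsilon_B(h\cdot b)=\varepsilon_H(h)\varepsilon_B(b)$, so only the $H$-slots survive), but the paper avoids it entirely: it first establishes that the twisted multiplication $\cdot_{\tilde\sigma}$ coincides with the multiplication of $B\ltimes H_\sigma$, which is Hom-associative by the preceding lemma, and then invokes the general equivalence ``$\ _{\sigma}\cdot$ is Hom-associative iff $\sigma$ is a right Hom-2-cocycle'' to conclude that $\tilde\sigma$ is a cocycle. In other words, your Step 1 becomes a corollary of your Step 2, and doing it separately only buys you an independent (and more laborious) check; the paper's ordering spares you the bookkeeping of $\a$- and $\b$-powers that you yourself flag as the main hazard. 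The normalization and explicit convolution inverse $\tilde\sigma^{-1}(b\t h,b'\t h')=\varepsilon_B(b)\varepsilon_B(b')\sigma^{-1}(h,h')$ are handled identically in both.
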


\begin{proof}
For all $b,b'\in B$ and $h,h'\in H$,
$$\begin{aligned}
&(b\t h)\cdot_{\tilde{\sigma}}(b'\t h')\\
=&(\b^{-1}\o\a^{-1})(b_1\t \a^{-2}(b_{2(-1)})\a^{-1}(h_1))(\b^{-1}\o\a^{-1})(b'_1\t \a^{-2}(b'_{2(-1)})\a^{-1}(h'_1))\\
&\varepsilon_B(b_{2(0)})\varepsilon_B(b'_{2(0)})\sigma(h_2,h'_2)\\
=&(b\t \a^{-1}(h_1))(b'\t \a^{-1}(h'_1))\sigma(h_2,h'_2)\\
=&b(\a^{-3}(h_{11})\cdot \b^{-1}(b'))\t \a^{-2}(h_{12})\a^{-1}(h'_1)\sigma(h_2,h'_2)\\
=&b(\a^{-2}(h_1)\cdot\b^{-1}(b'))\ltimes\a^{-2}(h_{21})\a^{-1}(h'_1)\sigma(\a^{-1}(h_{22}),h'_2)\\
=&b(\a^{-2}(h_1)\cdot\b^{-1}(b')\ltimes\a^{-1}(h_1)\cdot_\sigma h'\\
=&(b\ltimes h)(b'\ltimes h'),
\end{aligned}$$
thus the multiplication in $(B\t H)_{\tilde{\sigma}}$ coincides with the one in $B\ltimes H_\sigma$ which is associative, so $\tilde{\sigma}$ is a right 2-cocycle, and we have $(B\t H)_{\tilde{\sigma}}=B\ltimes H_\sigma$ as algebras. Obviously they have the same left $B\t H$-comodule structure, and easy to prove that $\tilde{\sigma}$ is normalized and convolution invertible with the inverse $\tilde{\sigma}^{-1}(b\t h,b'\t h')=\varepsilon_B(b)\varepsilon_B(b')\sigma^{-1}(h,h')$. For the uniqueness of $\tilde{\sigma}$, apply $\varepsilon_B\o\varepsilon_H$ to both sides of
$$(b\t h)\cdot_{\tilde{\sigma}}(b'\t h')=(b\ltimes h)(b'\ltimes h').$$
The proof is completed.

\end{proof}

\begin{definition}
Let $(H,\a)$ be a Hom-Hopf algebra and $(A,\b)$ an $H$-bicomodule algebra with comodule structures $A\rightarrow A\o H, a\mapsto a_{<0>}\o a_{<1>}$ and $A\rightarrow H\o A, a\mapsto a_{[-1]}\o a_{[0]}$. $(M,\mu)$ is called a left-right Yetter-Drinfeld module over $(H,A,H)$ if $M$ is a left $A$-module (action denoted by $\cdot$) and a right $H$-comodule (coaction denoted by $m\mapsto m_{(0)}\o m_{(1)}$) such that
\begin{eqnarray}
\label{B}
\b(a_{<0>})\cdot m_{(0)}\o\a^2(a_{<1>})\a(m_{(1)})=(a_{[0]}\cdot m)_{(0)}\o(a_{[0]}\cdot m)_{(1)}\a^{2}(a_{[-1]}),
\end{eqnarray}
for all $a\in A,h\in H$ and $m\in M$. The category of Yetter-Drinfeld modules is denoted by $_AYD(H)^H.$
\end{definition}

\begin{remark}
Note that when $A=H$ the above definition coincides with the Hom Yetter-Drinfeld module introduced in \cite{MP}
\end{remark}

Let now $\sigma$ be a normalized and convolution invertible lazy Hom-2-cocycle on $H$. Consider the $H$-bimodule algebra $H(\sigma)$ and the associated category $_{H(\sigma)}YD(H)^H$. For an object $(M,\mu)$ in this category, the compatibility (\ref{B}) becomes
\begin{eqnarray}
\label{C}
\a(h_1)\cdot m_{(0)}\o\a^2(h_2)\a(m_{(1)})=(h_2\cdot m)_{(0)}\o(h_2\cdot m)_{(1)}\a^{2}(h_1),
\end{eqnarray}
which is the very compatible condition in the category $_{H}YD(H)^H$. It is easy to see that \ref{C} is equivalent to
\begin{eqnarray}
\label{D}
(h\cdot m)_{(0)}\o(h\cdot m)_{(1)}=\a^{-1}(h_{21})\cdot m_{(0)}\o[\a^{-2}(h_{22})\a^{-1}(m_{(1)})]S^{-1}(h_1).
\end{eqnarray}

\begin{proposition}
Let $\sigma$ be a normalized and convolution invertible lazy Hom-2-cocycle on $H$. Let $(M,\mu)$ be a finite dimensional object in $_{H(\sigma)}YD(H)^H$. Then
\begin{itemize}
  \item [(1)] $(M^*,(\mu^{-1})^*)$ is an object in $_{H(\sigma^{-1})}YD(H)^H$ with the following structures:
\begin{eqnarray}
&&<h\cdot f,m>=<f,S_1(h)\cdot\mu^{-2}(m)>\\
&&f_{(0)}(m)f_{(1)}=f(\mu^{-2}(m_{(0)}))S^{-1}(\a^{-2}(m_{(1)})),
\end{eqnarray}
for all $h\in H,m\in M$ and $f\in M^*$.
  \item [(2)] $(M^*,(\mu^{-1})^*)$ is an object in $_{H(\sigma^{-1})}YD(H)^H$ with the following structures:
\begin{eqnarray}
&&<h\cdot f,m>=<f,S_2(h)\cdot\mu^{-2}(m)>\\
&&f_{(0)}(m)f_{(1)}=f(\mu^{-2}(m_{(0)}))S(\a^{-2}(m_{(1)})),
\end{eqnarray}
for all $h\in H,m\in M$ and $f\in M^*$.
\end{itemize}
\end{proposition}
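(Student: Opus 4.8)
The plan is to verify the three pieces of data making $(M^*,(\mu^{-1})^*)$ an object of ${}_{H(\sigma^{-1})}YD(H)^H$ --- the left $H(\sigma^{-1})$-module structure, the right $H$-comodule structure, and the Yetter--Drinfeld compatibility --- testing every identity against an arbitrary $m\in M$ so that statements about functionals on $M^*$ become statements in $M$ that reduce to the known structure of $M$. For the module axioms I would use that the preceding proposition gives a Hom-algebra anti-isomorphism $S_1\colon H(\sigma^{-1})\rightarrow H(\sigma)$ with $\a\circ S_1=S_1\circ\a$; since $(M,\mu)$ is a left $H(\sigma)$-module, the transpose action $\<h\cdot f,m\>=\<f,S_1(h)\cdot\mu^{-2}(m)\>$ should turn it into a left $H(\sigma^{-1})$-action on $M^*$, the order reversal inherent in dualising being cancelled exactly by the anti-multiplicativity $S_1(h\cdot_{\sigma^{-1}}k)=S_1(k)\cdot_\sigma S_1(h)$. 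Concretely I would check the unit law $1\cdot f=(\mu^{-1})^*(f)$ from $S_1(1)=1$, the twist law $(\mu^{-1})^*(h\cdot f)=\a(h)\cdot(\mu^{-1})^*(f)$ from $\a\circ S_1=S_1\circ\a$, and the Hom-associativity $\a(h)\cdot(k\cdot f)=(h\cdot_{\sigma^{-1}}k)\cdot(\mu^{-1})^*(f)$ from anti-multiplicativity together with the left-module axioms of $M$.

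Next I would treat the comodule axioms. The coaction determined by $f_{(0)}(m)f_{(1)}=f(\mu^{-2}(m_{(0)}))S^{-1}(\a^{-2}(m_{(1)}))$ dualises the right $H$-coaction of $M$ through the antipode. Using $\varepsilon\circ S^{-1}=\varepsilon$ and $\a\circ S=S\circ\a$, the counit axiom and the twist axiom $((\mu^{-1})^*\o\a)\r=\r\circ(\mu^{-1})^*$ follow quickly, while the coassociativity axiom $(\r\o\a)\r=((\mu^{-1})^*\o\Delta)\r$ is the place where I would invoke the anti-comultiplicativity $\Delta\circ S^{-1}=(S^{-1}\o S^{-1})\circ\tau\circ\Delta$ alongside the comodule coassociativity of $M$.

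The crux is the Yetter--Drinfeld compatibility. Using the equivalent form (\ref{D}), I must establish for $M^*$ that
$$(h\cdot f)_{(0)}\o(h\cdot f)_{(1)}=\a^{-1}(h_{21})\cdot f_{(0)}\o[\a^{-2}(h_{22})\a^{-1}(f_{(1)})]S^{-1}(h_1).$$
The strategy is to pair both sides with an arbitrary $m$, unfold the dual action through $S_1$ and the dual coaction through $S^{-1}$, and then transport the resulting expression back onto the compatibility (\ref{C})/(\ref{D}) already known for $M$. This is where the genuine content sits: the coproduct formula $\Delta(S_1(h))=S_1(h_2)\o S(h_1)$ must be substituted to split the $S_1$-action correctly, and the lazy-cocycle identities of the formulae lemma --- especially the relations (4.6)--(4.11) governing how $\sigma^{-1}$, $S$ and $S^{-1}$ interact --- are precisely what allow the $\sigma$-scalars produced by the two twisted multiplications $\cdot_\sigma$ and $\cdot_{\sigma^{-1}}$ to be moved past the $H$-factors and cancelled. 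The laziness of $\sigma$ is indispensable here, since it is what guarantees these scalars commute with the coproduct so that both sides collapse onto the compatibility for $M$.

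Finally, part (2) follows by the identical scheme with $S_1$ replaced by $S_2$, with $S^{-1}$ in the coaction replaced by $S$, and with the coproduct formula $\Delta(S_2(h))=S_2(h_2)\o S^{-1}(h_1)$ used in place of the one for $S_1$; the matching formulae-lemma identities play the role their counterparts played in part (1), so I would prove (1) in detail and record (2) \emph{mutatis mutandis}. The main obstacle I anticipate is not conceptual but combinatorial: keeping the numerous Hom-twists $\a^{-k}$ and $\mu^{-k}$ and the Sweedler indices aligned when the coproduct of $S_1(h)$ is fed into the compatibility. The guiding principle --- that anti-(co)multiplicativity of $S_1$ and $S^{-1}$ together with laziness forces closure onto (\ref{D}) --- is clear, but the index discipline in the final computation will be the delicate part.
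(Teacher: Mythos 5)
Your plan coincides with the paper's proof: the module structure is obtained by transposing through the anti-isomorphism $S_1$, the comodule structure through $S^{-1}$, and the Yetter--Drinfeld compatibility is checked by pairing against $m\in M$, inserting $\Delta(S_1(h))=S_1(h_2)\o S(h_1)$, applying the compatibility (\ref{D}) known for $M$, and cancelling the cocycle scalars via the lazy-cocycle identities of the formulae lemma, with part (2) handled mutatis mutandis via $S_2$. The only cosmetic difference is that the paper verifies the compatibility in the form (\ref{C}) rather than (\ref{D}), which it has already declared equivalent.
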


\begin{proof}
 We only prove (1) while (2) could be proven similarly. First $M^*$ is a right $H$-comodule with the structure (4.20), and since $S_1$ is an algebra anti-isomorphism, $M^*$ is a left $H(\sigma^{-1})$-module. We only need to verify the compatible condition. For all $h\in H, m\in M$ and $f\in M^*$,
$$\begin{aligned}
&(\a(h_1)\cdot f)_{(0)})\a^2(h_2)\a(f_{(1)})\\
=&f_{(0)}(S_1\a(h_1))\cdot\mu^{-2}(m))\a^2(h_2)\a(f_{(1)})\\
=&f(\mu^{-2}(S_1\a(h_1)\cdot\mu^{-2}(m))_{(0)}))\a^2(h_2)S^{-1}(\a^{-1}((S_1\a(h_1)\cdot\mu^{-2}(m))_{(1)}))\\
=&f(\mu^{-2}(\a^{-1}((S_1\a(h_1))_{21})\cdot\mu^{-2}(m_{(0)})))\\
&\a^2(h_2)S^{-1}(\a^{-3}((S_1\a(h_1))_{22})\a^{-4}(m_{(1)}))S^{-1}\a^{-1}(((S_1\a(h_1))_1))\\
\stackrel{(4.15)}{=}&f(S\a^{-2}(h_{112})\cdot\mu^{-4}(m_{(0)}))\a^2(h_2)S^{-1}((S\a^{-2}(h_{111}))\a^{-4}(m_{(1)}))S^{-1}(S_1(h_{12}))\\
=&f(S\a^{-2}(h_{112})\cdot\mu^{-4}(m_{(0)}))\\
&\sigma^{-1}(S(h_{1221}),h_{1222})(\a(h_2)\a^{-1}(h_{121}))[S^{-1}(\a^{-3}(m_{(1)}))\a^{-1}(h_{111})]\\
=&f(S\a^{-1}(h_{12})\cdot\mu^{-4}(m_{(0)}))\sigma^{-1}(S(h_{212}),h_{221})\a^{-1}(h_{222}h_{211})[S^{-1}(\a^{-3}(m_{(1)}))h_{11}]\\
\stackrel{(4.12)}{=}&f(S\a^{-1}(h_{12})\cdot\mu^{-4}(m_{(0)}))\sigma^{-1}(S(h_{21}),h_{22})[S^{-1}(\a^{-2}(m_{(1)}))\a(h_{11})]\\
=&(h_2\cdot f)_{(0)}(m)(h_2\cdot f)_{(1)}\a^2(h_1)
\end{aligned}$$
\end{proof}

\begin{remark}
In the above proposition, when $\sigma$ is trivial, $M\in\  _{H}YD(H)^H$ and these are the usual left and right duals of $M$ in $_{H}YD(H)^H$.
\end{remark}

\end{document}